\title[Fractional minimal rank]{Fractional minimal rank}
 \author[B. W. Grossmann and H. J. Woerdeman]{Ben Grossmann}
\author[]{Hugo J.~Woerdeman}
\address{Department of Mathematics \\
Drexel University\\
3141 Chestnut St.\\
Philadelphia, PA, 19104}
\email{bwg25@drexel.edu,hugo@math.drexel.edu}
\thanks{HJW is supported by Simons Foundation grant 355645.}
\subjclass{15A03} \keywords{minimal rank, triangular minimal rank, fractional minimal rank, minimal cycle, bipartite chordal}
\date{}
\newtheorem{thm}{Theorem}[section]
\newtheorem{claim}[thm]{Claim}
\newtheorem{cor}[thm]{Corollary}
\newtheorem{lem}[thm]{Lemma}
\newtheorem{prop}[thm]{Proposition}
\numberwithin{equation}{section}
\newcommand{\col}{\operatorname{col}}
\newcommand{\F}{\mathbb{F}}
\newcommand{\C}{\mathbb{C}}
\newcommand{\N}{\mathbb{N}}
\newcommand{\eps}{\varepsilon}
\newcommand{\s}[1]{\mathcal{#1}}
\DeclareMathOperator{\rk}{rank}
\DeclareMathOperator{\mr}{mr}
\DeclareMathOperator{\fmr}{fmr}
\DeclareMathOperator{\tmr}{tmr}
\DeclareMathOperator{\xmr}{xmr}
\DeclareMathOperator{\codim}{codim}
\renewcommand{\col}{\operatorname{col}}
\newcommand{\mat}[1]{\begin{bmatrix} #1 \end{bmatrix}}
\newcommand{\smat}[1]{
\left[\begin{smallmatrix} #1
\end{smallmatrix}\right]
}
\newcommand{\vast}{\bBigg@{4}}
\newcommand{\Vast}{\bBigg@{5}}
\begin{document}

\begin{abstract}
The notion of fractional minimal rank of a partial matrix is introduced,  a quantity that lies between the triangular minimal rank and the minimal rank of a partial matrix. The fractional minimal rank of partial matrices whose bipartite graph is a minimal cycle are determined. Along the way, we determine the minimal rank of a partial block matrix with invertible given entries that lie on a minimal cycle. Some open questions are stated.
\end{abstract}

\maketitle

\section{Introduction}\label{sec:intro} In this paper we introduce the tensor product of a partial matrix ${\mathcal A}$ with a traditional matrix $B$ resulting in a partial block matrix in a standard way. As an example we have that 
$$ \begin{bmatrix} 1 & 1 & ? \cr ? & 1 & 1 \cr 2 & ? & 1 \end{bmatrix} \otimes B := \begin{bmatrix} B & B & ? \cr ? & B & B \cr 2B & ? & B \end{bmatrix}. $$ In our definition of the fractional minimal rank we use $B=I_k$, the $k\times k$ identity matrix. Thus for the partial matrix above we obtain
$$ {\mathcal A} \otimes I_k := \begin{bmatrix} I_k & I_k & ? \cr ? & I_k & I_k \cr 2I_k & ? & I_k \end{bmatrix}. $$ The minimal rank of this partial block matrix (denoted ${\rm mr} ({\mathcal A} \otimes I_k)$), defined to be the lowest possible rank of all its completions, depends on $k$. Recall that a completion of a partial (block) matrix is obtained by specifying the unknown entries (indicated by ?) by appropriately sized matrices over the field ${\mathbb F}$ under consideration. We now define the {\it fractional minimal rank} (denoted ${\rm fmr}$) of a partial matrix ${\mathcal A}$ by
$$ {\rm fmr} ({\mathcal A}):= \inf_{k\in {\mathbb N}} \frac{{\rm mr} ({\mathcal A} \otimes I_k)}{k} .$$
We will see that for block triangular partial matrices the fractional minimal rank and the minimal rank agree. In fact, inspired by the conjecture in \cite{CJRW89}, we conjecture that the fractional minimal rank and the minimal rank agree when the bipartite graph corresponding to the partial matrix is bipartite chordal. As minimal cycles of length 6, 8, 10, etc., are the basic building blocks of bipartite graphs that are not bipartite chordal, it is of interest to determine the fractional minimal rank of corresponding partial matrices. These have the form 
\begin{equation}\label{cyc1}
{\mathcal A} = 
\begin{bmatrix}
a_{11} & a_{12} & ? & \cdots & ?\\
? & a_{22} & a_{23} & \ddots & \vdots  \\
\vdots & \ddots & \ddots&\ddots & ?\\
\\
? & \cdots & ? & a_{n-1,n-1}  & a_{n-1,n} \\
a_{n1} & ? & \cdots & ?  & a_{nn}\\
\end{bmatrix}
\end{equation}
Our main result gives the fractional minimal rank of \eqref{cyc1} (where each $a_{ij}$ is non-zero).  For instance, we find that the fractional minimal rank of $\smat{1&1&?\\?&1&1\\2&?&1} = \frac 32$.
 
The paper is organized as follows. In Section \ref{prelims} we recall some definitions from the literature, introduce the fractional minimal rank, and establish some basic properties. The main result from this section is  that the fractional minimal rank lies between the triangular minimal rank and the minimal rank. In Section \ref{cycle}, we present our main result wherein we compute the fractional minimal rank of the partial matrices corresponding to ``minimal cycles'' (this terminology is explained in Section \ref{open}). In fact, we establish a minimal rank formula for block matrices of the form \eqref{cyc} where the prescribed blocks are invertible. In Section \ref{case zero}, we address the case in which the partial matrix in \eqref{cyc} contains some zero entries; in fact, we show that the triangular and ordinary minimal ranks coincide in all such cases, which means that the fractional minimal rank and ordinary minimal rank coincide. Finally, in Section \ref{open} we discuss this problem within the broader context and suggest some open problems to pursue.

\section{Preliminaries}\label{prelims}

A {\it partial matrix} is an $m\times n$ array $\s A=(a_{ij})_{i=1, j=1}^{m,\ \ n}$ of
elements in a field ${\mathbb F}$,  with some of the entries being specified
(known), and others being unspecified (unknown). In some cases we consider the entries to be matrices of
appropriate size. The unspecified
entries of a partial matrix are typically denoted by $?$, $X$, $x$, $Y$, $y$,
etc. (we use $?$ in this paper).  A {\it completion} of a
partial matrix is a specification of its unspecified entries,
resulting in a conventional (operator) matrix. For instance, 
%$$ 
%\begin{bmatrix} 1 & 2\cr 3 & 4 \end{bmatrix} \ {\rm is \ a \ completion \ of \ the \ partial \ matrix\ } \begin{bmatrix} 1 & ?\cr ? & 4 \end{bmatrix} . 
%$$
$\smat{1&2\\3&4}$ is a completion of the partial matrix $\smat{1&?\\?&4}$.  
%
% Definition of pattern
%

 Given such a partial matrix $\s A$, we define \textit{the pattern of knowns} as the set $J \subset \{1,\dots,m\} \times \{1, \dots,n\}$ such that $(i,j) \in J\iff a_{ij}$ is a known entry. We also write $\s A = \{a_{ij}: (i,j) \in J\}$ to indicate that $J$ is the pattern of knowns for the partial matrix $\s A$.

% %
%
% Ben's Note:
% Is it preferable to have it as it's own line?
%
% %

The {\it minimal
rank} of $\mathcal A$ (notation: mr$({\mathcal A})$) is defined by
\begin{equation} 
{\rm mr}({\mathcal A}) = {\rm min} \big\{{\rm rank} \,B
: B \, {\rm is \,a \,completion \,of} \, {\mathcal A} \big\}.
\end{equation}

% %
%
% Ben's note: we define the term "partial matrix", but not the term "pattern" (as in the pattern of known entries) nor do we define subpattern.
%
% %

We now recall some terminology and results from \cite{W93}: we say that a pattern $T \subseteq \{ 1, \ldots,n \} \times \{1, \ldots, m \}$ is
\emph{triangular} if whenever $(i,j) \in T$
and $(k,l) \in T$, then $(i,l) \in T$ or $(k,j) \in T$. In other
words, a subpattern of the form
%$$\begin{bmatrix} * & ? \\ ? & *
%\end{bmatrix}$$ 
$\smat{*&?\\?&*}$
is not allowed. Denote $\col_j (T) = \{i: (i,j) \in T  \}$. For a pattern
$T\subset \{1,\ldots ,n\}\times \{1,\ldots ,m\}$, one sees that $T$ is triangular if and
only if for all $1\le j_1, j_2\le m$ we have $\col_{j_1} (T)
\subseteq \col_{j_2} (T)$ or $\col_{j_2} (T) \subseteq \col_{j_1}
(T)$. In other words, the pattern $T$ is triangular if and only if
the sets $\col_j (T)$, $1\le j \le m$, may be linearly ordered by
inclusion. This is equivalent to the fact that after suitable
permutations of its rows and columns, the pattern can be
transformed into a block lower triangular one, with the blocks
having appropriate sizes. Clearly, an analogous result holds for
the ``rows'' of a triangular pattern $T$.

% % 
%
% Moved this: theorem should come after we say what a triangular pattern is
%
% %

We say that a partial matrix is triangular if its pattern of knowns is triangular in the above sense.  Triangular patterns are important for our purposes since it is relatively easy to compute the minimal rank of a partial matrix over such a pattern.  In particular, the following formula is given in \cite{W89}.

\begin{thm}\label{2}\cite{W89}
The partial matrix ${\mathcal A} = \{ A_{ij}:1 \leq j \leq i \leq
n \},$ with $A_{ij}$ of size $\nu_i \times \mu_j$, has minimal
rank \begin{equation}\label{minranktriangle} \mr {\mathcal A} = \sum_{i=1}^{n} {\rm rank} \begin{bmatrix} A_{i1}
& \ldots & A_{ii} \cr \vdots & & \vdots \cr A_{n1} & \ldots &
A_{ni}\end{bmatrix}  \quad \quad \quad \quad \end{equation}
$$ \quad \quad \quad \quad \quad \quad
- \sum_{i=1}^{n-1} {\rm rank}
\begin{bmatrix} A_{i+1, 1} & \ldots & A_{i+1, i} \cr \vdots & &
\vdots \cr A_{n1} & \ldots & A_{ni} \end{bmatrix}.$$ 
%In addition,
%if $\nu_1 > 0 $ and $\mu_n > 0$, the minimal rank completion is
%unique if and only if
%$$ {\rm rank} \begin{bmatrix} A_{i1} & \ldots &
%A_{ii} \cr \vdots & & \vdots \cr A_{n1} & \ldots &
%A_{ni}\end{bmatrix} = {\rm rank} A_{n1} , \  i=1,\ldots , n .$$

\end{thm}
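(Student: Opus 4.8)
The plan is to prove the stated formula by induction on $n$, peeling off the first block row and the first block column at each step, using a single partitioned-matrix rank inequality as the engine for the lower bound and an explicit completion for the upper bound. The base case $n=1$ is immediate: there $\mathcal{A}=A_{11}$, every completion equals $A_{11}$, and the formula reads $\mr\mathcal{A}=\rank A_{11}$. For the inductive step I would write an arbitrary completion $B$ of $\mathcal{A}$ as
\[
B = \mat{A_{11} & X \\ C & D},
\]
where $C=\smat{A_{21}\\ \vdots\\ A_{n1}}$ is the (fully known) first block column below the diagonal, $X$ is the first block row of unknowns, and $D$ is a completion of the block lower triangular partial matrix on rows and columns $2,\dots,n$. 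Here $\smat{A_{11}\\ C}$ is the full first block column, of rank $r_1:=\rank\smat{A_{11}\\ C}$, while $\rank C=s_1$.

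The engine for the lower bound is the inequality, valid for any numerical block matrix,
\begin{equation*}
\rank\mat{P & Q \\ R & S} \ge \rank\mat{P \\ R} + \rank\mat{R & S} - \rank R ,
\end{equation*}
which I would prove by projecting the column space $\range\smat{P & Q\\ R & S}$ onto the bottom block of rows: the image of this projection is $\range\smat{R & S}$, while its kernel already contains the $\rank\smat{P\\ R}-\rank R$ independent vectors of $\range\smat{P\\ R}$ whose bottom part vanishes, so rank--nullity gives the claim. Applied to the partition above it yields $\rank B \ge (r_1-s_1)+\rank\smat{C & D}$, and $\smat{C & D}$ is precisely the block of rows $2,\dots,n$ of $B$.

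To upgrade this to an exact recursion I would invoke the minimal rank formula for a single unknown block, $\mr\smat{A & ?\\ C & D}=\rank\smat{A\\ C}+\rank\smat{C & D}-\rank C$, whose lower bound is the inequality just established and whose matching completion is obtained by choosing each row of $X$ so that, whenever the corresponding row of $A_{11}$ lies in the row space of $C$, the entire first block row is forced into the row space of $\smat{C & D}$ and contributes no new rank. Since the unknowns in $X$ and those in $D$ occupy disjoint positions, the minimization over all completions factors as a minimization over $D$ followed by one over $X$, giving
\[
\mr\mathcal{A} = (r_1-s_1) + \mr\mathcal{A}',
\]
where $\mathcal{A}'$ is the partial matrix consisting of rows $2,\dots,n$, namely the known column $C$ prepended to the triangular part $D$. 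The decisive bookkeeping observation is that merging the known column $C$ with the next column turns $\mathcal{A}'$ into a genuine block lower triangular partial matrix of size $n-1$, whose corner-rank terms $\tilde r_p,\tilde s_p$ equal $r_{p+1},s_{p+1}$; the induction hypothesis then closes the computation, since $(r_1-s_1)+\sum_{i=2}^n r_i-\sum_{i=2}^{n-1}s_i$ is exactly the claimed formula.

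The main obstacle is the achievability half of the single-block formula, that is, producing an explicit $X$ attaining the lower bound. Everything else is either the elementary projection argument above or the verification that the corner ranks are preserved under the column merge; the genuine content lies in showing that the first block row can be made to reuse the row space of the rows beneath it to the greatest extent compatible with the prescribed block $A_{11}$.
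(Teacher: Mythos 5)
The paper gives no proof of this theorem at all---it is quoted from \cite{W89}---so there is no in-paper argument to compare against; your proposal supplies the standard inductive proof, and its skeleton is correct. The partitioned rank inequality $\rank\smat{P&Q\\R&S}\ge\rank\smat{P\\R}+\rank\smat{R&S}-\rank R$ and your projection proof of it are fine; the factorization $\min_{X,D}=\min_D\min_X$ is legitimate because the two groups of unknowns occupy disjoint positions and the increment $r_1-s_1$ is independent of $D$; and the bookkeeping after merging the known first block column into the second (giving an $(n-1)\times(n-1)$ block lower triangular pattern with corner ranks $\tilde r_p=r_{p+1}$, $\tilde s_p=s_{p+1}$) checks out, so the induction closes.

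The one step stated too loosely is the achievability half of the one-unknown-block formula $\mr\smat{A&?\\C&D}=\rank\smat{A\\C}+\rank\smat{C&D}-\rank C$, which you correctly identify as the crux. Choosing rows of $X$ ``whenever the corresponding row of $A_{11}$ lies in the row space of $C$'' fails if taken literally row by row: for $A=\smat{1&0\\0&1}$ and $C=\smat{1&1}$ no individual row of $A$ lies in $\row(C)$, yet $\rank\smat{A\\C}-\rank C=1$, so one row's worth of rank must still be absorbed. The standard repair is to first pick an invertible $T$ acting on the top block row alone so that $TA=\smat{A_1\\A_2}$ with the rows of $A_2$ lying in $\row(C)$ and the $r_1-s_1$ rows of $A_1$ independent modulo $\row(C)$; writing each row of $A_2$ as $\lambda^T C$ and taking the corresponding row of $TX$ to be $\lambda^T D$ (with the $A_1$-rows of $TX$ arbitrary) gives $\rank\smat{A&X\\C&D}\le\rank\smat{A_1&X_1}+\rank\smat{C&D}\le(r_1-s_1)+\rank\smat{C&D}$, matching your lower bound. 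With that paragraph supplied---or with the one-unknown-block lemma simply cited, since it is classical---your proof is complete and is essentially the argument of \cite{W89}.
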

%
%The uniqueness statement in Theorem \ref{2} is due to \cite{KW88}.

Next we say that $T$ is a \emph{triangular
subpattern} of $J$ if (1) $T
\subseteq J$ and (2) $T$ is triangular.  The pattern $T$ is called
a \emph{maximal triangular subpattern}
 of $J$ if $T \subseteq S \subseteq J$ with
$S$ a triangular subpattern of $J$ implies that $T=S$.  We now
have the following result.  For $T \subseteq J$ and $\mathcal{A} =
\{ a_{ij} : (i,j) \in J \}$ a partial matrix, let $\mathcal{A}
\mid T$ denote the partial matrix $\mathcal{A} \mid T = \{ a_{ij}:
(i,j) \in T \}$.

Let $J \subseteq \{1, \ldots, n \}
\times \{ 1, \ldots, m \}$ be a pattern, and let $\mathcal{A} = \{
A_{ij}\ : \ (i,j) \in J \}$ be a partial block matrix. Then
\begin{equation}\label{tmr} {\rm mr} \mathcal{A} \geq \max \{ {\rm
mr} (\mathcal{A} \mid T)\ :\  T \subseteq J \ {\rm maximal\
triangular}\}. 
\end{equation} 
The right hand side of \eqref{tmr} is defined to be the {\it triangular 
minimal rank} of $\s A$ (notation: $\tmr ( \s A)$).

We define the tensor product (i.e. Kronecker product) of a partial matrix in a manner corresponding to the tensor product of matrices.  In particular: for an $m \times n$ matrix $A$ and a $p \times q$ matrix $B$, the tensor product $A \otimes B$ is the $(mp)\times (nq)$ matrix defined by 
\[
A \otimes B = 
\mat{a_{11}B & \cdots & a_{1n}B\\
\vdots & \ddots & \vdots\\
a_{m1}B & \cdots & a_{mn}B}
\]
For a partial matrix $\s A$ and matrix $B \in \C^{n \times n}$, we analogously define $\s A \otimes B$ to be a partial matrix consisting of the corresponding formal products, where we define $B\,? = ?_{m \times n}$ for any $?$-entry and any matrix $B$.  So, for example: if we take $\s A = \smat{2&1&1\\1&?&1\\1&1&?}$, we would compute
\[
\s A \otimes I = 
\mat{
2I_3 & I & I\\
I & ?_{3 \times 3} & I\\
I & I & ?_{3 \times 3}
}
\]

We define the {\it direct sum} $\s A \oplus \s B$ of two partial matrices to be the partial matrix
\[
\s A \oplus \s B = \mat{\s A & ? \\ ? & \s B}. 
\]

For a positive integer $b$ (that is, $b \in \N$) we define the {\it b-fold minimal rank} to be
\begin{equation}
\mr_b(\s A) = \mr(\s A \otimes I_b)
\end{equation}
Finally, we define the {\it fractional minimal rank} to be
\begin{equation}\label{fmrinf}
\fmr(\s A) = \inf_{b \in \N} \frac{\mr_b(\s A)}{b}
\end{equation}
Notably, among these the fractional minimal rank is the only one that might not produce an integer value.

%For convenience, $\xmr$ will generically denote any of these $\R$-valued function on partial matrices.  That is, $\xmr$ is a "placeholder" with $\xmr \in \{\rk, \mr,\tmr,\mr_b,\fmr\}$.  We will refer to these as \textit{pseudo-ranks}.

In the next proposition we list some basic properties.
\begin{prop}\label{basics}

For partial matrices $\s A, \s B$ and $\xmr \in \{ \mr,\tmr,\mr_b,\fmr\}$, we have
\begin{itemize}
\item[(i)]
$
\max \{\xmr \s A, \xmr \s B\} \leq 
\xmr\mat{\s A & \s B} \leq \xmr(\s A) + \xmr(\s B) .
$
\item[(ii)]
$
\xmr(\s A \oplus \s B) = \max\{\xmr(\s A), \xmr(\s B)\}
$
\item[(iii)]
If $\s B$ can be attained by permuting the rows/columns of $\s A$ successively, then $\xmr(\s B) = \xmr(\s A)$.
\item[(iv)]
If $J$ is the pattern of knowns for $\s A$ and $T \subseteq J$, then $\xmr(\s A | T ) \leq \xmr (\s A)$.
\end{itemize}
\end{prop}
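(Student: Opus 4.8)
The plan is to prove each of the four items for all four rank variants simultaneously. The key observation is that $\mr_b(\s A) = \mr(\s A \otimes I_b)$ and $\fmr(\s A) = \inf_b \mr_b(\s A)/b$ are both built from the ordinary minimal rank $\mr$, so if I establish the inequalities for $\mr$ and verify they behave well under tensoring with $I_b$, the statements for $\mr_b$ follow by applying the $\mr$-result to $\s A \otimes I_b$ and $\s B \otimes I_b$, and the statements for $\fmr$ follow by dividing by $b$ and taking infima. For $\tmr$ I will reason separately using its definition as a maximum over maximal triangular subpatterns together with item (iv).

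First I would prove (iv), since it is the most basic and is used implicitly by $\tmr$. If $T \subseteq J$, then any completion of $\s A$ restricts to a completion of $\s A | T$, and restricting a matrix to a subset of its entries (i.e., deleting rows/columns or, more precisely, passing to the submatrix supported on the positions of $T$) cannot increase the rank; taking the minimum over completions gives $\mr(\s A | T) \le \mr(\s A)$. The same argument applied to $\s A \otimes I_b$ gives the $\mr_b$ and $\fmr$ versions, and for $\tmr$ it follows because the maximal triangular subpatterns of $T$ are contained in those of $J$. Next, for (iii), permuting rows and columns of a partial matrix induces the same permutation on every completion, and permutation of rows/columns preserves rank; this commutes with tensoring by $I_b$ (up to a further permutation), so all four quantities are invariant.

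For (i), the left inequality $\max\{\xmr \s A, \xmr \s B\} \le \xmr \mat{\s A & \s B}$ is an instance of (iv), since the patterns of $\s A$ and $\s B$ each sit inside the pattern of $\mat{\s A & \s B}$ as subpatterns (after the horizontal concatenation). For the right inequality, given completions $A$ of $\s A$ and $B$ of $\s B$ achieving $\mr \s A$ and $\mr \s B$, the block matrix $\mat{A & B}$ is a completion of $\mat{\s A & \s B}$ whose rank is at most $\rank A + \rank B$ (the column space of a horizontal concatenation is the sum of the two column spaces); minimizing gives $\mr \mat{\s A & \s B} \le \mr \s A + \mr \s B$. Since $(\s A \otimes I_b$ concatenated with $\s B \otimes I_b)$ equals $\mat{\s A & \s B} \otimes I_b$, this transfers to $\mr_b$ and, dividing by $b$ and passing to the infimum (using subadditivity of $\inf$), to $\fmr$; the $\tmr$ case follows from the corresponding inequality for $\mr$ applied on maximal triangular subpatterns.

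For (ii), the inequality $\xmr(\s A \oplus \s B) \ge \max\{\xmr \s A, \xmr \s B\}$ is again (iv), viewing $\s A$ and $\s B$ as subpatterns of the block-diagonal pattern. The reverse inequality is the crux and the main obstacle: I must show that the off-diagonal unknown blocks in $\mat{\s A & ? \\ ? & \s B}$ can be chosen so that the rank of the completion does not exceed the larger of the two individual minimal ranks. The idea is to take rank-minimizing completions $A$ and $B$, and then fill the off-diagonal $?$ blocks so as to make the rows (or columns) of the smaller-rank block lie in the row space of the larger block, effectively ``absorbing'' one block's contribution into the other; concretely one chooses the off-diagonal entries to realize a completion whose column space has dimension $\max\{\rank A,\rank B\}$. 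Verifying that such a fill is always achievable over an arbitrary field $\F$ is the delicate point, and the same construction applied blockwise must survive tensoring with $I_b$ (which it does, since $(\s A \oplus \s B) \otimes I_b = (\s A \otimes I_b) \oplus (\s B \otimes I_b)$) so as to yield the $\mr_b$ and $\fmr$ statements; for $\tmr$ one checks that every maximal triangular subpattern of the block-diagonal pattern decomposes compatibly into triangular subpatterns of the two summands.
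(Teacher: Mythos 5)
The paper states this proposition without proof (it only remarks that item (ii) is proven for triangular patterns in \cite{CJRW89}), so there is no in-paper argument to compare against; your outline has the right shape, and items (i), (iii), (iv) go through essentially as you describe. The one genuine gap is exactly where you write that verifying the fill for (ii) ``is the delicate point'' and then stop: the upper bound $\mr(\s A\oplus\s B)\le\max\{\mr(\s A),\mr(\s B)\}$ is the entire content of (ii), and ``choose the off-diagonal entries so that one block is absorbed into the other'' is the statement to be proved, not a proof. The standard way to close it, valid over any field: take rank factorizations $A=A_1A_2$, $B=B_1B_2$ of minimal-rank completions with inner dimensions $r_A=\rk A\ge r_B=\rk B$, pad $B_1$ with zero columns and $B_2$ with zero rows to obtain $\tilde B_1=\mat{B_1 & 0}$, $\tilde B_2=\mat{B_2\\0}$ with inner dimension $r_A$ and $\tilde B_1\tilde B_2=B$; then
\[
\mat{A_1\\ \tilde B_1}\mat{A_2 & \tilde B_2}=\mat{A & A_1\tilde B_2\\ \tilde B_1A_2 & B}
\]
is a completion of $\s A\oplus\s B$ of rank at most $r_A$. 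Since $(\s A\oplus\s B)\otimes I_b$ coincides (up to permutation) with $(\s A\otimes I_b)\oplus(\s B\otimes I_b)$, the $\mr_b$ and hence $\fmr$ cases follow.

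Two smaller corrections. For the right-hand inequality of (i) in the $\fmr$ case, ``subadditivity of $\inf$'' is not a valid step: $\inf_b\bigl(f(b)+g(b)\bigr)\le\inf_b f(b)+\inf_b g(b)$ fails in general because the two infima may be approached along different $b$; you need Corollary \ref{cor: FMR lim}, which lets you pass to limits, where the sum rule does hold. For the $\tmr$ case of (ii), a maximal triangular subpattern of $\s A\oplus\s B$ does not ``decompose into'' subpatterns of both summands: since the off-diagonal blocks are entirely unknown, any pattern containing a known position from each diagonal block contains the forbidden configuration $\smat{*&?\\?&*}$, so every triangular subpattern lies wholly inside one summand, and $\tmr(\s A\oplus\s B)=\max\{\tmr(\s A),\tmr(\s B)\}$ follows directly from that observation. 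Neither of these is fatal, but both replace steps in your write-up that, as stated, do not work.
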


Item (ii) is proven (for triangular patterns in particular) in \cite{CJRW89}.

% Fmr property
Now, we have an important property of the $b$-fold minimal rank:

\begin{lem}[subadditivity]\label{lem: subadd}
For any partial matrices $\s A \subset \C^{m \times n}$ and any $b,c \in \N$, we have
\[
\mr_{b + c}(\s A) \leq \mr_b(\s A) + \mr_c(\s  A)
\]
\end{lem}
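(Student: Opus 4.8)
The plan is to build, from optimal completions of $\s A \otimes I_b$ and $\s A \otimes I_c$, an explicit completion of $\s A \otimes I_{b+c}$ whose rank does not exceed the sum of the two optimal ranks. The whole argument rests on the elementary identity $I_{b+c} = I_b \oplus I_c$, which forces every \emph{prescribed} block of $\s A \otimes I_{b+c}$ to split as a direct sum with vanishing cross-terms; this is precisely where the use of the identity (rather than an arbitrary $B$ in the tensor product) is essential.

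Concretely, I would first fix completions $B$ of $\s A \otimes I_b$ and $C$ of $\s A \otimes I_c$ with $\rank B = \mr_b(\s A)$ and $\rank C = \mr_c(\s A)$. Writing $B_{ij}$ (resp.\ $C_{ij}$) for the $(i,j)$ block of $B$ (resp.\ $C$), I would define a completion $D$ of $\s A \otimes I_{b+c}$ by declaring its $(i,j)$ block to be $B_{ij}\oplus C_{ij}$. The point to verify is that $D$ really is a completion: at a prescribed position $(i,j)$ the block of $\s A \otimes I_{b+c}$ equals $a_{ij} I_{b+c}= (a_{ij} I_b)\oplus(a_{ij} I_c)=B_{ij}\oplus C_{ij}$, since $B$ and $C$ are themselves completions; at an unspecified position the matrix $B_{ij}\oplus C_{ij}$ is simply one admissible choice for the free block. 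Thus $D$ agrees with $\s A\otimes I_{b+c}$ wherever the latter is prescribed.

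It then remains to compute $\rank D$. By grouping the first $b$ of the $b+c$ tensor copies together and the last $c$ copies together --- that is, by a simultaneous permutation of the rows and of the columns --- the matrix $D$ is carried to the block-diagonal matrix $\mat{B & 0 \\ 0 & C}$, whence $\rank D = \rank B + \rank C = \mr_b(\s A)+\mr_c(\s A)$. Since $\mr(\s A \otimes I_{b+c}) \le \rank D$, this yields the claimed inequality. Equivalently, one may invoke Proposition~\ref{basics}(iii): $\s A \otimes I_{b+c}$ is permutation-equivalent to a partial matrix of the form $\mat{\s A\otimes I_b & \s X\\ \s Y & \s A \otimes I_c}$ whose prescribed entries in the off-diagonal blocks $\s X,\s Y$ are all zero, and one then completes $\s X$ and $\s Y$ by zero blocks.

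There is no serious obstacle here; the only thing requiring care is the bookkeeping of the permutation realizing the block-diagonal form, together with the observation that the prescribed cross-entries vanish. That vanishing is the crux of the whole lemma, and it is exactly what the identity structure of $I_{b+c}=I_b\oplus I_c$ provides.
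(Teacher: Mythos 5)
Your proposal is correct and follows essentially the same route as the paper's own proof: build the completion of $\s A \otimes I_{b+c}$ blockwise as $B_{ij} \oplus C_{ij}$ from optimal completions $B$ and $C$, check that the prescribed blocks $a_{ij}I_{b+c} = a_{ij}I_b \oplus a_{ij}I_c$ are respected, and then permute rows and columns to exhibit the result as $\mat{B & 0\\ 0 & C}$, giving rank $\mr_b(\s A) + \mr_c(\s A)$. No substantive difference from the argument in the paper.
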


\begin{proof}

Let $\s A = \{a_{ij} : (i,j) \in J\}$.
For $k \in \{b,c\}$, let the completions $A_k \in \s A \otimes I_k$ be such that $\rk(A_k) = \mr_k(\s A)$.  Write 
\[
A_k = 
\mat{
A_k^{(1,1)} & \cdots & A_k^{(1,n)}\\
\vdots & \ddots & \vdots\\
A_k^{(m,1)} & \cdots & A_k^{(m,n)}
}
\]
where each $A_k^{(i,j)} \in \C^{k \times k}$.  In accordance with our definitions of the partial matrices, we note that for $(i,j) \in J$, we have $A_k^{(i,j)} = a_{i,j}I_k$ (whereas the remaining $A_k^{(i,j)}$ are unknown block matrices).

We construct the completion $A_{b + c} \in \s A \otimes I_{b + c}$ as follows: divide $A_{b+c}$ into a block matrix (as we have done with the $A_k$ above), and define
\[
A_{b+c}^{(i,j)} = A_{b}^{(i,j)} \oplus A_{c}^{(i,j)}
\]
Note that for $(i,j) \in J$, $A_{b+c}^{(i,j)} = a_{ij}I_{b+c}$, so that $A_{b + c} \in \s A \otimes I_{b+c}$.  Moreover, we note that there exist permutation matrices $P,Q$ such that
$$
PA_{b+c}Q = \mat{A_b & 0\\0 & A_c}
$$
so that $\rk(A_{b+c}) = \rk(A_b) + \rk(A_c)$.  Thus, we may conclude that
\[
\mr_{b+c}(\s A) \leq \rk(A_{b+c}) = \rk(A_b) + \rk(A_c)
\]
as desired.
\end{proof}

Note that Fekete's lemma \cite{Fekete} applies.

\begin{lem}\cite{Fekete} A sequence $(a_n)_{n \in \N}$ is called subadditive if for all $m,n \in \N$, \\
$a_{m+n} \leq a_m + a_n$.  For any subadditive sequence $(a_n)$, $\lim_{n \to \infty} \frac{a_n}{n}$ exists and is equal to $\inf_{n \in \N} \frac {a_n}{n}$.
\end{lem}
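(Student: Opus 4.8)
The plan is to set $L = \inf_{n \in \N} a_n/n$ (permitting $L = -\infty$, although in our applications $a_n = \mr_n(\s A) \ge 0$ forces $L$ to be finite) and to prove the two-sided estimate $L \le \liminf_{n} a_n/n$ together with $\limsup_{n} a_n/n \le L$; these pin the limit to exist and equal $L$. The first inequality is immediate: since $a_n/n \ge L$ for every $n$ by definition of the infimum, every subsequential limit is at least $L$, so $\liminf_{n} a_n/n \ge L$. The substance of the argument lies entirely in the reverse inequality for the $\limsup$.

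To establish $\limsup_{n} a_n/n \le L$, I would fix an arbitrary $m \in \N$ and aim to show $\limsup_{n} a_n/n \le a_m/m$; taking the infimum over $m$ then gives the claim. First, iterating subadditivity yields $a_{qm} \le q\,a_m$ for every $q \in \N$. For a general index $n$, apply the division algorithm to write $n = qm + r$ with $q \ge 0$ and $r \in \{1,\dots,m\}$, and use subadditivity once more to obtain $a_n \le a_{qm} + a_r \le q\,a_m + a_r$. Dividing by $n$ gives
\[
\frac{a_n}{n} \le \frac{qm}{n}\cdot\frac{a_m}{m} + \frac{a_r}{n}.
\]

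Next I would let $n \to \infty$ with $m$ held fixed. Then $q \to \infty$, so $qm/n \to 1$, while $a_r$ ranges over the finite, hence bounded, set $\{a_1,\dots,a_m\}$, so $a_r/n \to 0$. Passing to the $\limsup$ therefore gives $\limsup_{n} a_n/n \le a_m/m$. Since $m$ was arbitrary, $\limsup_{n} a_n/n \le \inf_{m} a_m/m = L$, and combined with the first paragraph we conclude $L \le \liminf_{n} a_n/n \le \limsup_{n} a_n/n \le L$, so the limit exists and equals $L$.

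I do not expect a genuine obstacle beyond the bookkeeping of the remainder term $a_r/n$, which is the only place where the finiteness of the initial block $\{a_1,\dots,a_m\}$ enters; once $m$ is fixed this term is dominated by a constant over $n$ and vanishes in the limit. The one point requiring a word of care is the extended-real case $L = -\infty$: the same division-algorithm bound shows that for each fixed $m$ the quantity $a_n/n$ is eventually below $a_m/m + \eps$, and since $a_m/m$ can be made arbitrarily negative this forces $a_n/n \to -\infty = L$. In the present paper this subtlety never arises, since $\mr_n$ is a nonnegative integer and hence $L \ge 0$.
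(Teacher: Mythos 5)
Your proof is correct. The paper itself gives no proof of this lemma --- it is stated with a citation to Fekete's 1923 paper --- so there is nothing to compare against, but your argument is the standard one: the division-algorithm decomposition $n = qm + r$, the iterated subadditivity bound $a_{qm} \le q\,a_m$, and the observation that the remainder term $a_r/n$ vanishes because $\{a_1,\dots,a_m\}$ is finite. Your handling of the $L = -\infty$ case is also sound, and you correctly note it is irrelevant here since $\mr_n(\s A) \ge 0$.
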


Since the sequence of $b$-fold minimal ranks is subadditive, we may now define the fractional minimal rank as a limit:

\begin{cor}\label{cor: FMR lim}
The limit $\lim_{b \to \infty}\frac {\mr_b(\s A)}{b}$ exists. Moreover,
\[
\lim_{b \to \infty}\frac {\mr_b(\s A)}{b} = 
\fmr(\s A) = \inf_{b \in \N} \frac{\mr_b(\s A)}{b}
\]
\end{cor}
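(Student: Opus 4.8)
The plan is to recognize this as an immediate consequence of the two lemmas that precede it, with essentially no additional work beyond assembling them. The key observation is that the object of interest is the sequence $(a_b)_{b \in \N}$ defined by $a_b := \mr_b(\s A)$, and everything needed about this sequence has already been proved.

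First I would set $a_b := \mr_b(\s A)$ and invoke Lemma \ref{lem: subadd}, which states precisely that $\mr_{b+c}(\s A) \leq \mr_b(\s A) + \mr_c(\s A)$ for all $b,c \in \N$. In the notation $a_b$ this reads $a_{b+c} \leq a_b + a_c$, so $(a_b)$ is a subadditive sequence in the sense defined in Fekete's lemma. I would then apply Fekete's lemma directly to this sequence to conclude that $\lim_{b \to \infty} \frac{a_b}{b}$ exists and equals $\inf_{b \in \N} \frac{a_b}{b}$. Rewriting in terms of $\mr_b$ gives
\[
\lim_{b \to \infty} \frac{\mr_b(\s A)}{b} = \inf_{b \in \N} \frac{\mr_b(\s A)}{b},
\]
and the right-hand side is by definition \eqref{fmrinf} equal to $\fmr(\s A)$. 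This yields all three claimed equalities at once.

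A small point worth recording is that the limit is genuinely a finite nonnegative real number rather than $-\infty$: since each $\mr_b(\s A)$ is a rank, we have $a_b \geq 0$ for all $b$, hence $\frac{a_b}{b} \geq 0$ and the infimum is bounded below by $0$. Thus the fractional minimal rank is a well-defined nonnegative real.

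There is no real obstacle here, since the substantive content — the subadditivity of $b \mapsto \mr_b(\s A)$ — was already dispatched in Lemma \ref{lem: subadd} via the block direct-sum construction, and Fekete's lemma is quoted as a black box. The only thing to be careful about is matching the hypothesis of Fekete's lemma (a subadditive sequence indexed by $\N$) to the sequence at hand, which is exactly what the identification $a_b = \mr_b(\s A)$ accomplishes.
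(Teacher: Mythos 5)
Your proposal is correct and matches the paper's argument exactly: the corollary is obtained by applying Fekete's lemma to the subadditive sequence $b \mapsto \mr_b(\s A)$, whose subadditivity is Lemma \ref{lem: subadd}. The added remark that the sequence is nonnegative (so the limit is finite) is a harmless and reasonable extra observation.
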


Note that the $\fmr$ of a partial matrix will never be between 0 and 1. In particular, if the partial matrix $\s A$ contains only zero known entries, then we have $\mr_b(\s A) = \mr(\s A) = 0$.  On the other hand, if $\s A$ has a non-zero known entry, then all $b$-fold completions will contain a $b \times b$ multiple of the identity matrix as a submatrix, which means that $\rk(A) \geq b$ for all $b$-fold completions of $\s A$, which means that $\mr_b(\s A) \geq b$.

We also have the following general result.

\begin{thm}\label{t<f<m} For any partial matrix $\s A$, we have
\begin{equation} 
 \tmr (\s A ) \le \fmr (\s A) \le \mr (\s A ). \end{equation}
\end{thm}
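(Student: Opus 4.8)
For the upper bound $\fmr(\s A) \le \mr(\s A)$, the plan is to exhibit, for every $b$, a completion of $\s A \otimes I_b$ of rank at most $b\cdot\mr(\s A)$. Let $A$ be a completion of $\s A$ with $\rk(A) = \mr(\s A)$. The natural candidate is $A \otimes I_b$: since the $?$-entries of $\s A \otimes I_b$ are precisely the $b\times b$ blocks in positions where $\s A$ is unknown, and $A$ fills exactly those positions, $A \otimes I_b$ is a completion of $\s A \otimes I_b$. Then $\rk(A \otimes I_b) = \rk(A)\cdot\rk(I_b) = b\,\mr(\s A)$, so $\mr_b(\s A) \le b\,\mr(\s A)$, and dividing by $b$ and taking the infimum gives $\fmr(\s A) \le \mr(\s A)$. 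This direction is routine.

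For the lower bound $\tmr(\s A) \le \fmr(\s A)$, the idea is to relate the triangular minimal rank of $\s A$ to that of $\s A \otimes I_b$. First I would observe that tensoring with $I_b$ commutes with the triangular structure: if $T \subseteq J$ is a maximal triangular subpattern of $\s A$, then the corresponding pattern for $\s A \otimes I_b$ is triangular, and one should check that $\mr(\s A \otimes I_b \mid T\otimes I_b) = b\cdot\mr(\s A \mid T)$. The cleanest way to get this is via the triangular formula \eqref{minranktriangle} from Theorem \ref{2}: each rank appearing in that formula, when computed for $\s A\mid T$ tensored with $I_b$, simply scales by $b$ because $\rk(M \otimes I_b) = b\,\rk(M)$. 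Summing and subtracting as in the formula yields the factor $b$. Combining with the defining inequality \eqref{tmr} applied to $\s A \otimes I_b$, namely $\mr(\s A \otimes I_b) \ge \mr(\s A \otimes I_b \mid T \otimes I_b)$, gives $\mr_b(\s A) \ge b\,\tmr(\s A)$ for every $b$. Dividing by $b$ and taking the infimum then yields $\tmr(\s A) \le \fmr(\s A)$.

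The main obstacle is the bookkeeping in the lower bound: one must verify that a maximal triangular subpattern $T$ of the pattern $J$ of $\s A$ induces, via the tensor construction, a triangular subpattern of the pattern of $\s A \otimes I_b$ that realizes the $b$-scaling of the minimal rank. The subtlety is that $T$ need not be \emph{maximal} as a subpattern of $\s A \otimes I_b$, but this is harmless: inequality \eqref{tmr} and the monotonicity in Proposition \ref{basics}(iv) only require $T \otimes I_b$ to be \emph{some} triangular subpattern, and the identity $\mr(\s A\otimes I_b \mid T\otimes I_b) = b\,\mr(\s A \mid T)$ is exactly what the scaling property of the triangular rank formula delivers. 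I would carry out the proof by choosing $T$ to be a maximal triangular subpattern attaining $\tmr(\s A) = \mr(\s A \mid T)$, and then push it through the tensor construction as above.
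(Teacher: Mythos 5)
Your proposal is correct and follows essentially the same route as the paper: the heart of the argument in both is that the triangular minimal rank formula of Theorem \ref{2} scales by $b$ under tensoring with $I_b$ (since $\rk(M\otimes I_b)=b\,\rk M$), combined with monotonicity under restriction to a triangular subpattern; the paper gets the upper bound even more cheaply by just taking $b=1$ in the infimum, but your $A\otimes I_b$ completion is an equally valid (and equally routine) way to see it.
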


\begin{proof}

It is clear that $\fmr(\s A) \leq \mr(\s A)$, since
\begin{equation}
\fmr(\s A) = \inf_{b \in \N} \frac{\mr_b(\s A)}{b} \leq \frac{\mr_1(\s A)}{1} = \mr(\s A)
\end{equation}
On the other hand, we wish to show that $\tmr(\s A) \leq \fmr(\s A)$.  To that end, let $J$ denote the pattern of knowns for $\s A$, and suppose that $T \subseteq J$ is any maximal triangular; it suffices to show that $\mr(\s A \mid T) \leq \fmr(\s A)$.  For convenience, denote $\s T = \s A \mid T$

It follows from theorem $\ref{2}$ that $\mr_b(\s T) = b \mr(\s T)$.  In particular: by part (iii) of proposition \ref{basics} (i.e. by applying a suitable permutation of rows and columns), we may assume without loss of generality that $\s T$ has the form of the matrix presented in theorem $\ref{2}$.  That is, we can say that $\s T = \{T_{ij} : 1 \leq j \leq i \leq n\}$, with $T_{ij}$ of size $\nu_i \times \mu_j$.  With that, we note that $\s T \otimes I = \{T_{ij} \otimes I : 1 \leq j \leq i \leq n\}$.  So, following the formula, we have
\begin{align*} 
\mr_b \s T = 
\mr (\s T \otimes I_b) &= \sum_{i=1}^{n} {\rm rank} \begin{bmatrix} T_{i1} \otimes I_b
& \cdots & T_{ii} \otimes I_b \cr \vdots & & \vdots \cr T_{n1} \otimes I_b & \cdots &
T_{ni} \otimes I_b\end{bmatrix}  \quad \quad \quad \quad 
\\
& \qquad 
- \sum_{i=1}^{n-1} {\rm rank}
\begin{bmatrix} T_{i+1, 1} \otimes I_b & \cdots & T_{i+1, i} \otimes I_b \cr \vdots & &
\vdots \cr T_{n1} \otimes I_b & \ldots & T_{ni} \otimes I_b \end{bmatrix}
\\ & =
{\rm rank} \left[\begin{bmatrix} T_{i1}
& \cdots & T_{ii} \cr \vdots & & \vdots \cr T_{n1} & \cdots &
T_{ni} \end{bmatrix} \otimes I_b \right]  \quad \quad \quad \quad 
\\
& \qquad 
- \sum_{i=1}^{n-1} {\rm rank}
\left[\begin{bmatrix} T_{i+1, 1}& \cdots & T_{i+1, i} \cr \vdots & &
\vdots \cr T_{n1} & \ldots & T_{ni} \end{bmatrix} \otimes I_b \right]
\end{align*}
Now, noting that $\rk(A \otimes B) = \rk(A)\rk(B)$, we may rewrite the above as 
\begin{align*}
\mr(\s T \otimes I_b) &= 
b \cdot \vast[{\rm rank} \begin{bmatrix} T_{i1}
& \cdots & T_{ii} \cr \vdots & & \vdots \cr T_{n1} & \cdots &
T_{ni} \end{bmatrix}   \quad \quad \quad \quad 
\\
& \qquad 
- \sum_{i=1}^{n-1} {\rm rank}
\begin{bmatrix} T_{i+1, 1}& \cdots & T_{i+1, i} \cr \vdots & &
\vdots \cr T_{n1} & \ldots & T_{ni} \end{bmatrix} \vast] = b \cdot \mr (\s T)
\end{align*}
With that, we may finally note that by part (iv) of proposition \ref{basics}, we have
\begin{align*}
\fmr(\s A) &\geq \fmr(\s A \mid T)
= \inf_{b \in \N} \frac{\mr_b(\s A \mid T)}{b} = \mr(\s A \mid T)
\end{align*}
Now, since $\mr(\s A \mid T) \leq \fmr(\s A)$ for every maximal triangular subpattern $T$, we can conclude that $\tmr(\s A) \leq \fmr(\s A)$, as desired.

\end{proof}

An curious property of the fractional minimal rank is that if $\fmr(\s A) < \mr(\s A)$, the minimizer $A \in \s A \otimes I_b$ can not have simultaneously triangularizable block entries. That is, for all partial matrices $\s A$:
 
\begin{prop}
If $\s A \otimes I_b$ achieves its minimal rank with upper triangular blocks, then the minimizing completion $A$ satisfies $\rk(A) \geq b\mr(\s A)$.
\end{prop}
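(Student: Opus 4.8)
The plan is to exploit the upper-triangular block structure by ``slicing'' the minimizing completion along the diagonals of its blocks and reorganizing it into a genuinely block upper triangular matrix, whose rank is at least the sum of the ranks of its diagonal blocks. Write the minimizing completion as $A = (A^{(i,j)})$ with each $A^{(i,j)} \in \C^{b \times b}$, where $A^{(i,j)} = a_{ij} I_b$ for $(i,j) \in J$ (the pattern of knowns) and every $A^{(i,j)}$ is upper triangular by hypothesis. For each $s \in \{1, \dots, b\}$, form the $m \times n$ scalar matrix $A_s := \big( (A^{(i,j)})_{ss} \big)_{i,j}$ obtained by reading off the $s$-th diagonal entry of each block.

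First I would observe that each $A_s$ is itself a completion of $\s A$: for $(i,j) \in J$ we have $(A^{(i,j)})_{ss} = (a_{ij} I_b)_{ss} = a_{ij}$, so $A_s$ agrees with $\s A$ on all known positions, while its entries at unknown positions are unconstrained. Consequently $\rk(A_s) \geq \mr(\s A)$ for every $s$.

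Next I would reorder the rows and columns of $A$ by grouping together all rows (resp. columns) sharing the same within-block index. Explicitly, indexing a row of $A$ by the pair $(i,s)$ and reordering so that $s$ is the outer index, the $(mb) \times (nb)$ matrix $A$ is transformed into one whose $(s,t)$ block is the $m \times n$ matrix $B_{st} := \big( (A^{(i,j)})_{st}\big)_{i,j}$. Because each $A^{(i,j)}$ is upper triangular, $B_{st} = 0$ whenever $s > t$, so the reordered matrix is block upper triangular with diagonal blocks $B_{ss} = A_s$; since it is obtained from $A$ by row and column permutations, it has the same rank as $A$.

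Finally I would invoke the elementary fact that a block upper triangular matrix has rank at least the sum of the ranks of its diagonal blocks (proved by selecting a maximal independent set of columns within each diagonal block and checking, via the triangular vanishing below the diagonal, that the chosen columns remain jointly independent across blocks). This yields
\[
\rk(A) \ge \sum_{s=1}^b \rk(B_{ss}) = \sum_{s=1}^b \rk(A_s) \ge b\,\mr(\s A),
\]
as claimed. The one step that requires genuine care is the superadditivity of rank over the diagonal blocks of a block triangular matrix; the rest is bookkeeping with the permutation that turns ``blockwise upper triangular'' into ``block upper triangular.'' Note that, combined with the subadditivity Lemma \ref{lem: subadd}, which gives $\rk(A) = \mr_b(\s A) \le b\,\mr(\s A)$, this in fact forces the equality $\rk(A) = b\,\mr(\s A)$.
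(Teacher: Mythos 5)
Your proof is correct and follows essentially the same route as the paper: both apply the perfect-shuffle permutation to turn the blockwise-upper-triangular completion into a block upper triangular matrix whose diagonal blocks are the diagonal slices $A_s$, observe that each slice is a completion of $\s A$, and invoke the superadditivity of rank over the diagonal blocks of a block triangular matrix. Your closing remark that subadditivity forces the equality $\rk(A) = b\,\mr(\s A)$ is a correct small addition not stated in the paper's proof.
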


\textit{Proof.} Let $A \in A \otimes I_b$ be given by
\[
A = \mat{
A_{11} & \cdots & A_{1n}\\
\vdots & \ddots & \vdots\\
A_{m1} & \cdots & A_{mn}
}, \qquad A_{i,j} \in \C^{b \times b}
\]
Let $\{e_1,\dots,e_n\}$ denote the canonical basis of $\C^n$.

Suppose that each block $A_{ij}$ is upper triangular.  That is, for fixed $i,j$ and $1 \leq p,q \leq b$, we have $p>q \implies e_p^TA_{ij}e_q = 0$.  In other words, $A$ is such that
\[
p>q \implies (e_i \otimes e_p)^TA(e_j \otimes e_q) = 0
\]
Define the matrix $B$ so that $(e_i \otimes e_p)^TA(e_j \otimes e_q) = (e_p \otimes e_i)^TB(e_q \otimes e_j)$.  In particular, note that $B$ has the form
\[
B = \mat{
B_{11} & \cdots & B_{1b}\\
\vdots & \ddots & \vdots\\
B_{b1} & \cdots & B_{bb}
} \qquad B_{p,q} \in \C^{m \times n}
\]
Now, we note that $B_{p,q} = 0$ whenever $p<q$, so that $B$ is block upper-triangular.  Moreover, we note that if $a_{ij}$ is a known entry of the partial matrix $\s A$, then $A_{ij} = a_{ij}I$, and
\[
e_i^TB_{pp}e_j = e_p^TA_{ij}e_p = a_{ij}
\]
thus, $B_{pp} \in \s A$.  We can conclude that
\[
\rk A = \rk B \geq \sum_{p=1}^b \rk B_{pp} \geq b \mr(\s A)
\]
which was the desired result.

\hfill $\square$

As an aside we observe that two $k\times k$ matrices are simultaneously upper triangularizable if and only if $w(A,B)(AB-BA)$ has trace 0 for all words $w(A,B)$ in $A$ and $B$ of length at most $k^2-1$; see \cite{AK}.  It is also notable that if a collection of matrices over an algebraically closed field commute, then they are simultaneously triangularizable.

\vspace{.5 cm}

%
% NEED TO DO SOMETHING WITH THE
% BIPARTITE GRAPH STUFF
%

\section{The minimal cycle case}\label{cycle}
The main result in this section is the following. 

\begin{thm}\label{cyclethm} Let $\F$ be a field (not necessarily algebraically closed). Consider the partial matrix $\s A$ over $\F$ given by
\begin{equation}\label{cyc}
{\mathcal A} = 
\begin{bmatrix}
a_{11} & a_{12} & ? & \cdots & ?\\
? & a_{22} & a_{23} & \ddots & \vdots  \\
\vdots & \ddots & \ddots&\ddots & ?\\
\\
? & \cdots & ? & a_{n-1,n-1}  & a_{n-1,n} \\
a_{n1} & ? & \cdots & ?  & a_{nn}\\
\end{bmatrix}. 
\end{equation}
In case all specified $a_{ij} \in \F$ are nonzero and
$$  a_{12}a_{23}\cdots a_{n-1,n}a_{n1}\neq a_{11}a_{22}\cdots a_{n-1,n-1}a_{nn} ,$$ we have that 
$$1=\tmr (\s A) < \fmr (\s A ) = \frac{n}{n-1} < \mr (\s A ) = 2 . $$
Otherwise $$ \tmr (\s A) = \fmr (\s A ) =  \mr (\s A ) = 1 . $$
\end{thm}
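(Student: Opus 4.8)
My plan is to handle the three quantities in increasing order of difficulty, dispatching the degenerate case and the two extreme quantities first. In the equality case (all $a_{ij}\neq 0$ and $\prod a_{i,i+1}\,a_{n1}=\prod a_{ii}$), propagating $v_1=1$, $v_{i+1}=v_i\,a_{i,i+1}/a_{ii}$ and $u_i=a_{ii}/v_i$ produces a rank-one completion $uv^T$, so $\mr(\s A)=1$; since some entry is nonzero we have $1\le\tmr(\s A)\le\fmr(\s A)\le\mr(\s A)=1$ by Theorem~\ref{t<f<m}, forcing all three equal to $1$. In the nonequality case the same $uv^T$ matches every prescribed entry except the corner, where it is off by a nonzero scalar, and adding the single-entry rank-one matrix $(a_{n1}-u_nv_1)e_ne_1^T$ repairs the corner, giving a rank-two completion; rank one is impossible, so $\mr(\s A)=2$, and this argument works over any field. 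For $\tmr(\s A)=1$ I would note that the bipartite graph of $\s A$ is a single $2n$-cycle, so the bipartite graph of any triangular subpattern $T\subsetneq J$ is a proper subgraph and hence a forest; with all entries nonzero and no cycle relation to satisfy, $\s A\mid T$ is rank-one completable, so $\mr(\s A\mid T)=1$ for every maximal triangular $T$ and thus $\tmr(\s A)=1$.

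The real content is $\fmr(\s A)=\tfrac{n}{n-1}$, and for this I would pass to $\mr_b(\s A)=\mr(\s A\otimes I_b)$ and use the identity $\mr_b(\s A)=nb-\max_U\dim U$, where $U\subseteq\F^{nb}=\bigoplus_{j=1}^n\F^b$ ranges over subspaces annihilated by some completion. The key reduction, which I would prove, makes ``annihilated by some completion'' explicit: since each unprescribed block is a free, independent matrix, the free blocks in a given row can be chosen to kill all of $U$ in that row exactly when the prescribed two-term relation of that row vanishes on the vectors of $U$ supported on that row's two prescribed columns. Hence $U$ is killable iff for each cyclic edge $e=\{i,i+1\}$ the intersection $A_e:=U\cap W_e$, with $W_e=\F^b_i\oplus\F^b_{i+1}$, lies in a fixed $b$-dimensional relation space $L_e$ determined by the two prescribed entries of that row.

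For the lower bound I would observe that the $n$ relation spaces are linearly independent, their sum being direct and equal to $\F^{nb}$, and that this holds \emph{precisely because} $\prod a_{i,i+1}\,a_{n1}\neq\prod a_{ii}$: writing $L_e=\ell_e\otimes\F^b$, independence of the $L_e$ reduces to independence in $\F^n$ of the cyclic bidiagonal system of row-vectors, whose determinant is exactly $\pm(\prod a_{i,i+1}\,a_{n1}-\prod a_{ii})$. Consequently the $A_e\subseteq L_e$ are in direct sum inside $U$, so $\sum_e\dim A_e\le\dim U$; combining this with the elementary bound $\dim A_e=\dim(U\cap W_e)\ge\dim U+2b-nb$ over all $n$ edges yields
\[
n(\dim U+2b-nb)\le\dim U\ \Rightarrow\ \dim U\le\tfrac{n(n-2)}{n-1}\,b\ \Rightarrow\ \mr_b(\s A)\ge\tfrac{n}{n-1}\,b,
\]
so $\fmr(\s A)\ge\tfrac{n}{n-1}$. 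For the matching upper bound it suffices, by Corollary~\ref{cor: FMR lim}, to exhibit one good $b$: taking $b=n-1$ I would construct a rank-$n$ completion of $\s A\otimes I_{n-1}$, equivalently a killable $U$ of dimension $n(n-2)$ assembled as $\bigoplus_e A_e$ with each $A_e$ a suitably chosen hyperplane of $L_e$, giving $\fmr(\s A)\le\tfrac{\mr_{n-1}(\s A)}{n-1}\le\tfrac{n}{n-1}$.

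I expect the main obstacle to be the sharp constant in the lower bound. The naive estimate, bounding how far the completion's block column spaces can drift around the cycle, only gives $\fmr(\s A)\ge\tfrac{n+1}{n}$, because it treats all $n$ transitions of the cycle as independent and overcounts. The decisive step that upgrades $n$ to $n-1$ is the linear independence of the $n$ relation spaces $L_e$, which is available exactly in the nonequality case and lets the $n$ intersection inequalities be summed with no loss. Verifying that this independence is equivalent to $\prod a_{i,i+1}\,a_{n1}\neq\prod a_{ii}$, and checking in the upper-bound construction that the chosen hyperplanes $A_e$ really do yield a killable subspace (that is, that no unintended pair-supported vectors appear in $\bigoplus_e A_e$), are the two places where I would expect to spend the most care.
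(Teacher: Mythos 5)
Your proposal is correct in outline but follows a genuinely different route from the paper. The paper deduces Theorem \ref{cyclethm} from a more general block result (Theorem \ref{tiangen}): after normalizing to the form $\s A_H$ with $H$ the cyclic product, the lower bound $\mr \geq \frac{nk-d}{n-1}$ is obtained by chaining codimension estimates for the fixed-point spaces $E_1(\cdot)$ of products of off-diagonal blocks, zig-zagging around the cycle from a middle position out to the corner $(n,1)$, and the matching upper bound comes from an explicit column-by-column construction with $H$ in Jordan form (applied here with $H = hI_{n-1}$, so the eigenvalue hypothesis is automatic). You instead work with the null space: your characterization of "killable" subspaces via $U \cap W_e \subseteq L_e$ is correct (the free blocks in row $i$ can absorb everything except vectors of $U$ supported on the two prescribed columns), the identification of $\bigoplus_e L_e = \F^{nb}$ with the nonvanishing of the cyclic bidiagonal determinant $\pm(a_{12}\cdots a_{n1} - a_{11}\cdots a_{nn})$ checks out, and summing $\dim(U\cap W_e) \geq \dim U - (n-2)b$ against $\sum_e \dim(U\cap W_e)\leq \dim U$ gives the sharp constant; this is arguably cleaner than the paper's eigenspace chain and works uniformly over any field without any normal-form step. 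What you lose is the paper's more general block statement with a nontrivial $d = \dim\ker(H-I)$, which your scalar relation spaces $L_e = \ell_e\otimes\F^b$ do not directly capture. The one place you must actually supply an argument rather than a flag is the upper bound: "suitably chosen hyperplanes" $A_e \subset L_e$ works, but you need that any $n-1$ of the $n$ hyperplanes $T_e\subset\F^{n-1}$ (in the identification $L_e\cong\F^{n-1}$) intersect trivially --- e.g.\ take the $n-1$ coordinate hyperplanes together with $\{x:\sum x_i = 0\}$, noting that the nonzero scalings arising from the chain of support constraints do not move a subspace --- and then verify, exactly as you anticipate, that a vector of $\bigoplus_e A_e$ supported on one edge forces all other components to lie in such an $(n-1)$-fold intersection and hence to vanish. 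The remaining pieces (rank-one completion in the equality case, the rank-two completion and impossibility of rank one otherwise, and $\tmr = 1$ via the forest structure of proper subpatterns of the $2n$-cycle) are all sound and are essentially what the paper leaves implicit.
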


We will first prove the following, which generalizes a result by \cite{Tian}, who considered the case $n=3$. 

\begin{thm}\label{tiangen} Consider the block partial matrix over $\F$
\[
\s A = 
\mat{
A_{11} & A_{12} & ? & \cdots & ?\\
? & A_{22} & A_{23} & \ddots & \vdots  \\
\vdots & \ddots & \ddots&\ddots & ?\\
\\
? & \cdots & ? & A_{(n-1)(n-1)}  & A_{(n-1)n} \\
A_{n1} & ? & \cdots & ?  & A_{nn}\\
},
\]
where each $A_{ij}$ is an invertible matrix of size $k$. Put
$$ H = A_{11}^{-1}A_{12}A_{22}^{-1}A_{23}A_{33}^{-1} \cdots A_{n-1,n-1}^{-1}A_{n-1,n}A_{nn}^{-1}A_{n1} $$ and $d = \dim \ker(H - I)$.
Then we have 
\[
\mr(\s A) \geq \frac{nk - d}{n-1}. 
\]
Moreover: when $d = k$, we also have equality; i.e. $\mr(\s A) = k$.

Finally, when $k=n-1$, $d=0$, and $H$ has all $k$ of its eigenvalues in $\F$, we have equality: $\mr(\s A)=n$.  (Notably, if $\F$ is algebraically closed, then $H$ necessarily has all of its eigenvalues in $\F$).

\end{thm}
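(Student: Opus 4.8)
The lower bound established in the earlier parts of the theorem already gives $\mr(\s A)\ge \frac{n(n-1)-0}{n-1}=n$, so the whole content of the final statement is the matching upper bound, i.e. the construction of a completion of rank exactly $n$; this is what I would focus on. First I would normalize the data. The substitutions $A_{ij}\mapsto L_iA_{ij}R_j$ with invertible $L_i,R_j$ preserve $\mr(\s A)$ and replace $H$ by the conjugate $R_1^{-1}HR_1$. Choosing these factors to clear the diagonal and superdiagonal, and using that all eigenvalues of $H$ lie in $\F$ (so $H$ is similar over $\F$ to an upper triangular matrix, realized by a suitable $R_1$), I may assume $A_{ii}=I$ for all $i$, $A_{i,i+1}=I$ for $i<n$, and $A_{n1}=H$, with $H$ upper triangular and invertible (its diagonal entries are the nonzero eigenvalues).

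Next I would look for the completion in factored form $B=UV$ with $U\in\F^{n(n-1)\times n}$ and $V\in\F^{n\times n(n-1)}$, which forces $\rk B\le n$ for free. Writing $U$ as a column of blocks $U_i\in\F^{(n-1)\times n}$ and $V$ as a row of blocks $V_j\in\F^{n\times(n-1)}$, we have $B_{ij}=U_iV_j$, so the prescribed entries become $U_iV_i=I$ and $U_iV_{i+1}=I$ for $i<n$, together with $U_nV_1=H$. Each $U_i$ has rank $n-1$, hence a one-dimensional kernel $\spn(u_i)$; the relations $U_iV_i=U_iV_{i+1}=I$ give $V_{i+1}=V_i+u_ic_i^T$ for some $c_i$, while $U_nV_1=H$ and $U_nV_n=I$ give $V_1=V_nH+u_nc_n^T$. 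Thus once $V_1$ and the directions $u_i$ are fixed, all blocks are determined, and $U_i$ is recovered as the unique left inverse of $V_i$ with $U_iu_i=0$, which exists exactly when $\F^n=\col(V_i)\oplus\spn(u_i)$.

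The problem then reduces to choosing $V_1$ and $u_1,\dots,u_n$ so that $u_i\notin\col(V_i)$ for every $i$. I would take $u_i=e_i$ and let $V_1$ have rows $e_1^T,\dots,e_{n-1}^T$ and last row $r_n^T$. Unwinding the recursion shows that $V_i$ has its first $i-1$ rows taken from $V_1H^{-1}$ and its remaining rows from $V_1$, so $e_i\notin\col(V_i)$ amounts to the invertibility of the matrix with rows $e_1^TH^{-1},\dots,e_{i-1}^TH^{-1},e_{i+1}^T,\dots,e_{n-1}^T,r_n^T$. Here upper triangularity of $H$ is decisive: right multiplication by the upper triangular $H^{-1}$ preserves the leading (first nonzero) coordinate of a row, so $e_1^TH^{-1},\dots,e_{i-1}^TH^{-1},e_{i+1}^T,\dots,e_{n-1}^T$ are in echelon form, hence independent, and the annihilator of their span is spanned by the $i$-th column $He_i$ of $H$. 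The last condition is therefore $(He_i)^Tr_n\neq 0$, i.e. $(H^Tr_n)_i\neq 0$, for all $i$; choosing $r_n=(H^T)^{-1}\mb{1}$ makes $H^Tr_n=\mb{1}$, so each inner product equals $1\neq 0$. This works over an arbitrary field and yields a genuine completion of rank $\le n$, which together with the lower bound gives $\mr(\s A)=n$.

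I expect the main obstacle to be precisely the simultaneous nondegeneracy of all $n$ of these conditions on $V_1$: a generic choice forces one to avoid $n-1$ hyperplanes, which can fail over small finite fields. The triangular normal form for $H$, available thanks to the eigenvalue hypothesis, is exactly what collapses this into the single transparent requirement that $H^Tr_n$ have no zero coordinate, met by the explicit vector $(H^T)^{-1}\mb{1}$. Finally, I note that the hypothesis $d=0$ is used only to make the a priori lower bound equal $n$; the factored construction itself produces a rank-$\le n$ completion for every invertible upper triangular $H$, so it is the lower bound, not the construction, that requires $1$ to be a non-eigenvalue.
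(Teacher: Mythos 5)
Your proposal addresses only the last assertion of the theorem. The first (and main) claim, the lower bound $\mr(\s A)\ge\frac{nk-d}{n-1}$ for general $k$ and $d$, is nowhere proved: you invoke it as ``established in the earlier parts of the theorem,'' but it is part of the statement to be proved, and it is where almost all of the work in the paper's argument lies. The paper proves it by reducing to the pattern $\s A_H$ with identity blocks and corner block $H$, and then, for a completion of rank $r$, extracting from each $2\times 2$ block submatrix straddling the specified band an estimate of the form $\codim E_1(\cdot)\le r-k$ for eigenspaces at eigenvalue $1$ of products of two blocks; chaining $n-2$ of these via inclusions such as $E_1(HA_{ij})\cap E_1(A_{j-1,i+1}A_{ij})\subset E_1(A_{j-1,i+1}H^{-1})$ (and a left-eigenspace analogue) zig-zags from the middle of the matrix out to the corner and yields $k-d\le(n-1)(r-k)$. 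Nothing in your factorization argument substitutes for this, and the $d=k$ case (take the all-ones completion) is also omitted, though that one is trivial. As a proof of the theorem as stated, the proposal is therefore incomplete.

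For the part you do prove --- the rank-$n$ completion when $k=n-1$, $d=0$, and the eigenvalues of $H$ lie in $\F$ --- your argument is correct and genuinely different from the paper's. The paper puts $H$ in Jordan canonical form and writes the completion down column by column, declaring each later column an explicit linear combination of the first $n$; you instead posit $B=UV$ with inner dimension $n$, observe that the constraints force $V_{i+1}=V_i+u_ic_i^T$ and $V_1=V_nH+u_nc_n^T$, and reduce everything to the single condition that $H^Tr_n$ have no zero coordinate, which $r_n=(H^T)^{-1}\mathbf{1}$ satisfies over any field. I checked the key steps: with $u_i=e_i$ the recursion does give that $V_i$ has rows $1,\dots,i-1$ taken from $V_1H^{-1}$ and the rest from $V_1$; the condition $\F^n=\col(V_i)\oplus\spn(e_i)$ is exactly invertibility of $V_i$ with row $i$ deleted; and the echelon/annihilator argument for upper triangular $H$ (annihilator spanned by $He_i$) is sound. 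Your construction has two advantages over the paper's: it needs only upper triangularizability of $H$ over $\F$ rather than a Jordan form, and it never uses $\lambda_j\ne 1$ (the paper's explicit formulas do), so, as you correctly note, the hypothesis $d=0$ is consumed only by the lower bound. This would be a clean replacement for the paper's upper-bound construction --- but the lower bound still has to be supplied.
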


For a subspace $U \subset \C^k$, we define the \emph{codimension} of $U$ by $\codim(U) = \dim(\C^k/U) = k - \dim(U)$.  We note the following:

\begin{lem}\label{lem:cd}
For spaces $U, V \subset \C^k$, we have $\codim(U \cap V) \leq \codim(U) + \codim(V)$.
\end{lem}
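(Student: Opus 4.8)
The plan is to reduce the statement to the standard dimension formula for subspaces of a finite-dimensional space. First I would recall the Grassmann identity $\dim(U + V) = \dim(U) + \dim(V) - \dim(U \cap V)$, valid for any two subspaces $U, V \subseteq \C^k$. Rearranging this gives $\dim(U \cap V) = \dim(U) + \dim(V) - \dim(U + V)$, which isolates the quantity of interest.

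Next I would translate into codimensions using $\codim(U \cap V) = k - \dim(U \cap V)$. Substituting the expression above yields
\[
\codim(U \cap V) = k - \dim(U) - \dim(V) + \dim(U + V).
\]
Since $U + V \subseteq \C^k$ we have $\dim(U + V) \le k$, and bounding this last term by $k$ gives
\[
\codim(U \cap V) \le \bigl(k - \dim(U)\bigr) + \bigl(k - \dim(V)\bigr) = \codim(U) + \codim(V),
\]
which is exactly the claim. A coordinate-free variant, if preferred, is to consider the linear map $\C^k \to (\C^k/U) \oplus (\C^k/V)$ sending $x \mapsto (x + U,\, x + V)$; its kernel is precisely $U \cap V$, so the induced map $\C^k/(U \cap V) \hookrightarrow (\C^k/U) \oplus (\C^k/V)$ is injective and the dimension inequality follows immediately.

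I do not anticipate any real obstacle here: the lemma is a one-line consequence of the dimension formula. The only point requiring care is the bookkeeping between dimensions and codimensions, together with the observation that the inequality comes entirely from discarding the (nonnegative) correction term $k - \dim(U + V)$.
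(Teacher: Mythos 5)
Your proof is correct. Your primary route goes through the Grassmann identity $\dim(U+V)=\dim U+\dim V-\dim(U\cap V)$ and then bounds $\dim(U+V)\le k$; the paper instead observes in one line that the map $\C^k/(U\cap V)\to(\C^k/U)\oplus(\C^k/V)$, $x+(U\cap V)\mapsto(x+U,x+V)$, is well-defined and injective, which is exactly the ``coordinate-free variant'' you mention at the end. The two arguments are essentially equivalent in content: the dimension-formula version makes the slack term $k-\dim(U+V)$ explicit (so you can see exactly when equality holds), while the quotient-map version avoids any appeal to the dimension formula and generalizes verbatim to settings where one works with codimension directly. Either one is a complete proof of the lemma.
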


\begin{proof} It suffices to note that the linear map from $\C^k/(U \cap V)$ to $(\C^k/U) \oplus (\C^k/V)$ given by $x + (U \cap V) \mapsto (x+U, x+V)$ is well-defined and injective. \end{proof} 

\begin{lem} \label{lem:sim} If $A$ is invertible, then $AB$ is similar to $BA$
\end{lem}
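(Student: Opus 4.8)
The plan is to produce an explicit similarity matrix rather than argue abstractly. The key observation is that invertibility of $A$ licenses a conjugation: since $A^{-1}$ exists, I would simply compute
$$
A^{-1}(AB)A = (A^{-1}A)(BA) = BA.
$$
Thus $A$ itself implements the similarity, and $AB \sim BA$ follows immediately. First I would state that $A$ is the conjugating matrix, then verify the identity above by associativity and the cancellation $A^{-1}A = I$, and finally observe that $A$ is invertible by hypothesis so that this is a genuine similarity.

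There is essentially no obstacle here: the single hypothesis of invertibility of $A$ is exactly what is needed both to cancel $A^{-1}A$ and to guarantee that the conjugating matrix is itself invertible. As context, I would note that the conclusion holds more generally—$AB$ and $BA$ always share the same characteristic polynomial—but for the application in Theorem \ref{tiangen} only the invertible case is required, so the one-line conjugation argument suffices and no strengthening is needed.
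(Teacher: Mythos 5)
Your proof is correct and is essentially the same as the paper's, which writes the identical conjugation in the other direction as $A(BA)A^{-1} = AB$. No further comment is needed.
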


\begin{proof} $A(BA)A^{-1} = AB$. \end{proof} 

\noindent 
{\it Proof of Theorem \ref{tiangen}.} 
To begin, we note that for any $A \in \s A$: if we take
\begin{gather*}
P = \mat{
A_{11}^{-1} \\
&  A_{11}^{-1}A_{12}A_{22}^{-1}\\
&& A_{11}^{-1}A_{12}A_{22}^{-1}A_{23}A_{33}^{-1}\\
&&& \ddots 
},\\
Q = 
\mat{
I \\
& A_{12}^{-1}A_{11}\\
&& A_{23}^{-1}A_{22}A_{12}^{-1}A_{11}
\\
&&& \ddots 
}
\end{gather*}
then the matrix $PAQ$ is a rank-equivalent element of the partial matrix
\[
\s A_H = 
\mat{
I & I & ? & \cdots & ?\\
? & I & I & \ddots & \vdots  \\
\vdots & \ddots & \ddots&\ddots & ?\\
? & \cdots & ? & I  & I \\
H & ? & \cdots & ?  & I\\
}
\]
For the \textbf{invertible matrix} $H = A_{11}^{-1}A_{12}A_{22}^{-1}A_{23}A_{33}^{-1} \cdots A_{n-1,n-1}^{-1}A_{n-1,n}A_{nn}^{-1}A_{n1}$. Thus, it suffices in general to consider partial matrices of this latter form; that is, we assume without loss of generality that $\s A = \s A_H$.

Now, we show that $\mr(\s A) \geq \frac {nk - d}{n-1}$.

Let $A =  [A_{ij}]_{1 \leq i,j \leq n}$ be a completion of $\s A \otimes I_k$ (so that $A_{ij} \in \C^{k \times k}$).  That is, we have
\begin{align*}
A &= 
\mat{
A_{11} & A_{12} & A_{13} & \cdots & A_{1n}\\
A_{21} & A_{22} & A_{23} & \ddots & \vdots  \\
\vdots & \ddots & \ddots&\ddots & A_{(n-2)n}\\
A_{(n-1)1} & \cdots & A_{(n-1)(n-2)} & A_{(n-1)(n-1)}  & A_{(n-1)n} \\
A_{n1} & A_{n2} & \cdots & A_{n(n-1)}  & A_{nn} \\
} 
\\ & = 
\mat{
I & I & A_{13} & \cdots & A_{1n}\\
A_{21} & I & I & \ddots & \vdots  \\
\vdots & \ddots & \ddots&\ddots & A_{(n-2)n}\\
A_{(n-1)1} & \cdots & A_{(n-1)(n-2)} & I  & I \\
H & A_{n2} & \cdots & A_{n(n-1)}  & I\\
}
\end{align*}
We wish to show that $\mr(\s A) \geq \frac {nk - d}{n-1} $.  That is: we suppose that $A$ has rank $r$, and we want to show that $r \geq \frac {nk - d}{n-1}$.

In the following, let $E_{\lambda}(M) = E^R_{\lambda}(M) = \ker(\lambda I - M)$ denote the eigenspace of $M$ associated with eigenvalue $\lambda$.  Let $E^L_{\lambda}(M) = \ker(\lambda I - M^T)$ denote the ``left-eigenspace'' of $M$ associated with the eigenvalue $\lambda$.

We note that the matrix
\begin{align*} 
\mat{I & -H^{-1}\\0 & I}\mat{A_{11} & A_{1n}\\A_{n1} & A_{nn}} &=
\mat{I & -H^{-1}\\0 & I}\mat{I & A_{1n} \\H & I}
\\ & =
\mat{0 & A_{1n} - H^{-1} \\ H & I}
\end{align*}
has rank at most $r$.  Noting that the bottom-left corner has full rank, we conclude that 
\[
\rk(A_{1n} - H^{-1}) \leq r - k
\]  
So, we can say
\begin{align*} 
\codim(E_1(HA_{1n})) = \codim(\ker(HA_{1n} - I)) =\codim(\ker(A_{1n} - H^{-1})) \leq r - k
\end{align*}

Now, for any $1<i<j<n$ with $j > i + 1$, we can consider the matrix
\begin{align*}
    \mat{I & 0\\-A_{(j-1),(i+1)} & I} 
    \mat{A_{i,(i+1)} & A_{i,j}\\A_{(j-1),(i+1)} & A_{(j-1),j}} &=
    \mat{I & 0\\-A_{(j-1),(i+1)} & I} 
    \mat{I & A_{i,j}\\A_{(j-1),(i+1)} & I}
    \\ & = 
    \mat{I & A_{i,j}\\
    0 & I - A_{(j-1),(i+1)}A_{i,j}}
\end{align*}
which has rank at most $r$.  As before, we note that the upper-left corner has full rank to conclude that
\begin{equation}\label{eq:cd1}
\codim(E_1(A_{j-1,i+1}A_{ij})) = \rk(I - A_{(j-1),(i+1)}A_{i,j}) \leq r - k
\end{equation}

That is, $\codim(E_1(A_{(j-1),(i+1)}A_{i,j})) \leq r - k$.

\begin{claim}
 $E_{1}(HA_{i,j}) \cap E_1(A_{(j-1),(i+1)}A_{i,j}) \subset E_{1}(A_{(j-1),(i+1)}H^{-1})$
 \end{claim}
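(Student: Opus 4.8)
The plan is to verify the inclusion pointwise: take an arbitrary vector $x$ lying in the left-hand intersection and show directly that it satisfies the equation defining the right-hand eigenspace. Unwinding the definitions, membership $x \in E_1(HA_{ij})$ means $HA_{ij}x = x$, and membership $x \in E_1(A_{(j-1),(i+1)}A_{ij})$ means $A_{(j-1),(i+1)}A_{ij}x = x$. The target condition $x \in E_1(A_{(j-1),(i+1)}H^{-1})$ is likewise just $A_{(j-1),(i+1)}H^{-1}x = x$.

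The key observation is that $H$ is invertible, as already recorded above where it is written as a product of the invertible prescribed blocks. This lets me rearrange the first equation $HA_{ij}x = x$ into the form $A_{ij}x = H^{-1}x$. This is the only step that uses anything beyond bookkeeping, and it is precisely where invertibility of $H$ is essential: without it one could not replace the vector $A_{ij}x$ by $H^{-1}x$ in what follows.

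Substituting $A_{ij}x = H^{-1}x$ into the second equation then yields
$$A_{(j-1),(i+1)}H^{-1}x = A_{(j-1),(i+1)}A_{ij}x = x,$$
so that $x \in E_1(A_{(j-1),(i+1)}H^{-1})$. Since $x$ was an arbitrary element of the intersection, the claimed inclusion follows. I do not anticipate any real obstacle here; the entire content of the claim reduces to this single substitution made possible by the invertibility of $H$, and no genericity, dimension count, or case analysis is required.
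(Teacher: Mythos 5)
Your argument is correct and is essentially identical to the paper's: the paper computes $A_{(j-1),(i+1)}H^{-1}x = A_{(j-1),(i+1)}H^{-1}[HA_{i,j}x] = A_{(j-1),(i+1)}A_{i,j}x = x$, which is the same single substitution you perform, just written in the opposite order. No issues.
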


\noindent \textit{Proof.} For any $x \in E_{1}(HA_{i,j}) \cap E_1(A_{(j-1),(i+1)}A_{i,j})$, we have
\[
A_{(j-1),(i+1)}H^{-1}x = A_{(j-1),(i+1)}H^{-1}[HA_{i,j}x] = A_{(j-1),(i+1)}A_{i,j} x = x \qquad \square
\]
Consequently, we can conclude that
\begin{align*}
\codim(E_{1}(A_{(j-1),(i+1)}H^{-1})) &\leq 
\codim(E_{1}(HA_{i,j}) \cap E_{1}(A_{i,j}A_{(j-1),(i+1)}))
\\ & \leq 
\codim(E_{1}(HA_{i,j})) + \codim(E_{1}(A_{i,j}A_{(j-1),(i+1)})) 
\\ & \overset{\ref{eq:cd1}}{\leq}
\codim(E_{1}(HA_{i,j})) + (r - k)
\end{align*}

Similarly: for any $1 < j < i < n-1$, we can consider the matrix
\begin{align*}
\mat{I & 0\\ -A_{i,j} & I}
\mat{A_{j,j} & A_{j,i}\\ A_{i,j} & A_{i,i}} &= 
\mat{I & 0\\ -A_{i,j} & I}
\mat{I & A_{i,j}\\ A_{i,j} & I}
\\ & = 
\mat{I & A_{j,i}\\0 & I - A_{i,j}A_{j,i}}
\end{align*}
has rank at most $r$.  Noting that the upper-left block has full rank, we conclude that 
\begin{equation}\label{eq:cd2}
\codim(E_1^L(A_{i,j}A_{j,i})) = \rk (I - A_{i,j}A_{j,i}) \leq r - k
\end{equation}
That is, $\codim[E_1(A_{i,j}A_{j,i})] \leq r - k$.

\begin{claim}\label{bigclaim}
$[E_1^L(A_{i,j}A_{j,i}) \cap E_{1}^L(A_{i,j}H^{-1}) \subset E_{1}^L(HA_{j,i})$
\end{claim}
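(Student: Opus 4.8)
The plan is to mirror the two-line substitution used to establish the preceding (right-eigenspace) claim, but now phrased in terms of row vectors acting on the left. Recall that by definition $x \in E_1^L(M)$ means $(I - M^T)x = 0$, which upon transposing reads $x^T M = x^T$. So the first thing I would do is translate the two hypotheses into the row-vector identities
\[
x^T A_{i,j} A_{j,i} = x^T, \qquad x^T A_{i,j} H^{-1} = x^T,
\]
and then aim to deduce $x^T H A_{j,i} = x^T$, which is exactly the assertion $x \in E_1^L(HA_{j,i})$.

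The key algebraic step is to rewrite the second identity. Since $H$ is invertible, I would multiply $x^T A_{i,j} H^{-1} = x^T$ on the \emph{right} by $H$ to obtain $x^T A_{i,j} = x^T H$. Substituting this into the target quantity, I would compute
\[
x^T H A_{j,i} = (x^T A_{i,j}) A_{j,i} = x^T (A_{i,j} A_{j,i}) = x^T,
\]
where the last equality is the first hypothesis. This yields $x \in E_1^L(HA_{j,i})$ and finishes the argument.

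There is essentially no substantive obstacle here; the only point requiring care is the bookkeeping forced by the transpose convention in the definition of $E_1^L$, namely that a left-eigenvector relation $x^T M = x^T$ must be manipulated by right-multiplication rather than left-multiplication. Because $H$ sits on opposite sides in $A_{i,j}H^{-1}$ versus $HA_{j,i}$, it is important to clear $H^{-1}$ by multiplying on the right, so that the freed factor $x^T A_{i,j}$ lands in precisely the position needed to invoke the hypothesis $x^T A_{i,j} A_{j,i} = x^T$. Once the convention is pinned down, the proof is a direct computation completely parallel to the one given for the right-eigenspace claim, and it feeds into the codimension chain exactly as \eqref{eq:cd2} did on the right-eigenspace side.
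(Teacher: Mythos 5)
Your proposal is correct and is essentially the same two-line substitution the paper uses: the paper writes $x^T H A_{j,i} = [x^T A_{i,j}H^{-1}]HA_{j,i} = x^T A_{i,j}A_{j,i} = x^T$, which is algebraically identical to your rearrangement $x^T A_{i,j} = x^T H$ followed by applying the first hypothesis. No gaps.
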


\noindent \textit{Proof of claim \ref{bigclaim}.} For any $x \in [E_1^L(A_{i,j}A_{j,i}) \cap E_{1}^L(A_{i,j}H^{-1})$, we have
\[
x^T HA_{j,i} = [x^T A_{i,j}H^{-1}]HA_{j,i} = x^T A_{i,j} A_{j,i} = x^T \qquad \square
\]
Consequently, we can conclude that 
\begin{align*} 
\codim(E_{1}^L(HA_{j,i}) &\leq 
\codim(E_1^L(A_{i,j}H^{-1}) \cap E_1^L(A_{i,j}A_{j,i}))
\\ & \leq 
\codim(E_1^L(A_{i,j}H^{-1})) +  \codim(E_1^L(A_{i,j}A_{j,i}))
\\ & \overset{\eqref{eq:cd2}}{\leq}
\codim(E_1^L(A_{i,j}H^{-1})) +  (r - k)
\end{align*}
So, we now have the following two inequalities:
\begin{align}
    \codim(E_{1}(A_{(j-1),(i+1)}H^{-1})) & \leq \codim(E_{1}(HA_{i,j})) + (r-k) \qquad j > i + 1
    \label{eq:cd3}
    \\
    \codim(E_{1}(HA_{j,i})) & \leq \codim E_{1}(A_{i,j}H^{-1}) + (r-k) \qquad j < i < n
    \label{eq:cd4}
\end{align}

\textbf{Case 1: If $n$ is odd,} let $p = (n+3)/2$ and $q = (n - 1)/2 = p - 2$.  We have
\begin{align*}
    \codim[E_{1}(HA_{p,q})] 
    & \overset{\eqref{eq:cd4}}{\leq} \codim[E_{1}(A_{q,p}H^{-1})] + (r - k)
    \\ & \overset{\eqref{eq:cd3}}{\leq} \codim[E_{1}(HA_{p+1,q-1})] + 2(r - k)
    \\ & \overset{\eqref{eq:cd4}}{\leq} \codim[E_{1}(A_{q-1,p+1}H^{-1})] + 3(r - k)
    \\ & \qquad \vdots
    \\ & \overset{\eqref{eq:cd3}}{\leq} \codim[E_{1}(HA_{1,n})] + (n-3)(r-k) = (n-2)(r-k)
\end{align*}
However, we note that the matrix 
\begin{align*} 
\mat{I&-I\\0&I} \mat{A_{p,(q-1)} & A_{p,q}\\ A_{(p+1),(q-1)} & A_{(p+1),q}} &=
\mat{I&-I\\0&I} \mat{I & A_{p,q}\\ I & I}
\\ & =
\mat{0 & A_{p,q} - I\\I & I}
\end{align*}
has rank at most $r$, which tells us that $\rk(A_{p,q} - I)$ is at most $r - k$.  That is, $\codim(E_1(A_{p,q})) \leq r - k$.  

We note that $E_1(A_{p,q}) \cap E_1(HA_{p,q}) \subset E_1(H)$, since for any $x \in E_1(A_{p,q}) \cap E_1(HA_{p,q})$, we have
\[
Hx = HA_{p,q}x = x \qquad
\]

We can then say that $\codim[E_1(A_{p,q}) \cap E_1(HA_{p,q})] \geq \codim(E_1(H)) = k-d$.  It follows that 
\begin{align*} 
(k-d) &\leq \codim[E_{1}(HA_{p,q}) \cap E_1(A_{p,q})] 
\\ & \leq \codim(E_{1}(HA_{p,q})) + \codim(E_{1}(A_{p,q}))
\\ & \leq (r - k) + (n-2)(r - k) = (n-1)(r - k)
\end{align*}
Thus, we have
\[
(k-d) \leq (n-1)r - (n-1)k \implies \frac {nk - d}{n-1} \leq r
\]
which was what we wanted to prove.

\textbf{Case 2: If $n$ is even,} let $p = 1 + n/2$ and $q = n/2 = p-1$.  We have
\begin{align*}
    \codim[E_{1}(A_{p,q}H^{-1})] 
    & \overset{\eqref{eq:cd3}}{\leq} \codim[E_{1}(HA_{q+1,p-1})] + (r - k)
    \\ & \overset{\eqref{eq:cd4}}{\leq} \codim[E_{1}(A_{p-1,q+1}H^{-1})] + 2(r - k)
    \\ & \overset{\eqref{eq:cd3}}{\leq} \codim[E_{1}(HA_{q+2,p-2})] + 3(r - k)
    \\ & \qquad \vdots
    \\ & \overset{\eqref{eq:cd3}}{\leq} \codim[E_{1}(HA_{1,n})] + (n-3)(r-k) = (n-2)(r-k)
\end{align*}
However, we note that the matrix 
\begin{align*} 
\mat{I&0\\-I&I} \mat{A_{(p-1),q} & A_{(p-1),(q+1)}\\ A_{p,q} & A_{p,(q+1)}} &=
\mat{I&0\\-I&I} \mat{I & I\\ A_{p,q} & I}
\\ & =
\mat{I & I\\A_{p,q} - I & 0}
\end{align*}
has rank at most $r$, which tells us that $\rk(A_{p,q} - I)$ is at most $r - k$.  That is, $\codim(E_1(A_{p,q})) \leq r - k$.  

We note that $E_1^L(A_{p,q}) \cap E_1^L(A_{p,q}H^{-1}) \subset E_1^L(H^{-1})$, since for any $x \in E_1^L(A_{p,q}) \cap E_1^L(HA_{p,q})$, we have
\[
x^TH^{-1} = x^TA_{p,q}H^{-1} = x^T \qquad \square
\]
So, we have $E_1^L(A_{p,q}) \cap E_1^L(A_{p,q}H^{-1}) \subset E_1^L(H^{-1}) = E_1^L(H)$, which is to say that $\codim[E_1(A_{p,q}) \cap E_1(A_{p,q}H^{-1})] \geq k - d$.  It follows that 
\begin{align*} 
k - d &= \codim[E_{1}^L(A_{p,q}H^{-1}) \cap E_1(A_{p,q})] 
\\ & \leq \codim(E_{1}(A_{p,q}H^{-1})) + \codim(E_{1}(A_{p,q}))
\\ & \leq (r - k) + (n-2)(r - k) = (n-1)(r - k)
\end{align*}
Thus, we have
\[
k -d \leq (n-1)r - (n-1)k \implies \frac {nk - d}{n-1} \leq r
\]
which was the desired inequality.

To show that equality holds in the case of $k = n-1$ and $d = 0$,  It suffices to construct a completion $A \in \s A$ with $\rk(A) \leq n$.

First, we consider the case in which $H$ is in its Jordan canonical form.  So, we have
\[
H = 
\mat{
\lambda_1 & \eps_1\\
& \lambda_2 & \eps_2\\
&&\ddots & \ddots \\
&&&\lambda_{n-2} & \eps_{n-2}\\
&&&&\lambda_{n-1}
}
\]
With $\eps_j \in \{0,1\}$ for $j = 1,\dots,n-2$ and $\lambda_j \notin \{0,1\}$ for $j = 1, \dots, n-1$.  We construct the matrix $A$ as follows: take

Take

\begin{align*}
A &= 
\mat{
A_{11} & A_{12} & A_{13} & \cdots & A_{1n}\\
A_{21} & A_{22} & A_{23} & \ddots & \vdots  \\
\vdots & \ddots & \ddots&\ddots & A_{(n-2)n}\\
A_{(n-1)1} & \cdots & A_{(n-1)(n-2)} & A_{(n-1)(n-1)}  & A_{(n-1)n} \\
A_{n1} & A_{n2} & \cdots & A_{n(n-1)}  & A_{nn} \\
} 
\\ & = 
\mat{
I & I & A_{13} & \cdots & A_{1n}\\
A_{21} & I & I & \ddots & \vdots  \\
\vdots & \ddots & \ddots&\ddots & A_{(n-2)n}\\
A_{(n-1)1} & \cdots & A_{(n-1)(n-2)} & I  & I \\
H & A_{n2} & \cdots & A_{n(n-1)}  & I\\
}
\end{align*}

We fill in the first $n-1$ columns by taking $A_{11} = I$ and
\[
A_{i1} = 
\mat{
\lambda_1 & \eps_1\\
&\lambda_2 & \eps_2\\
&&\ddots & \ddots \\
&&&&\eps_{i-2}\\
&&&&\lambda_{i-2}\\
&&&&&I_{n-i-1}
} \quad i = 2,\dots,n
\]
We then define the $n$th column of $A$ by 
\[
\operatorname{col}_1(A_{12}) = 
\operatorname{col}_1(A_{22}) = (1,0,\dots,0)^T
\]
\[
\operatorname{col}_1(A_{i2}) = (1 - \eps_1, 1 - \lambda_2 - \eps_2, \dots, 1 - \lambda_{i-1} - \eps_{i-1}, 1 - \lambda_i,0,\dots,0)^T \quad i = 3, \dots, n
\]
We define the rest of the matrix $A$ by taking each remaining column to be appropriate linear combinations of previous columns.  

For $q > j - 1$, we can take
\[
\operatorname{col}_q(A_{ij}) = \operatorname{col}_q(A_{i1})
\]
so that $\operatorname{col}_{nj + q}(A) = \operatorname{col}_q(A)$.  For $q < j - 1$, we can take
\[
\operatorname{col}_1(A_{ij}) = \frac 1{\lambda_1} \col_1(A_{i1})
\]
\[
\col_q(A_{ij}) = -\eps_q \col_{q-1}(A_{ij}) + \frac 1{\lambda_q}\col_q(A_{i1}) \quad q = 2,\dots,j
\]
Finally, in the case that $q = j + 1$, we can take
\[
\col_{j+1}(A_{ij}) = \col_1(A_{i2}) + \sum_{q=1}^{j-1} \col_q(A_{i1}) - \sum_{q = 1}^{j-2} \col_q(A_{i1})
\]
With the above construction, we see that for every block-row $i$ and block-column $j$, there exist coefficients $\alpha_{1,j,q},\dots,\alpha_{n,j,q}$ (independent of $i$) such that 
\[
\col_q[A_{ij}] = \sum_{k=1}^{n-1} \alpha_{k,j,q} \col_k(A_{i1}) + \alpha_{n,j,q} \col_1(A_{i2})
\]
which in other words, means that 
\[
\col_{n(j-1) + q}(A) = \sum_{k=1}^n \alpha_{n,j,q} \col_k(A)
\]
That is, each column of $A$ is a linear combination of the first $n$ columns, which implies that $\rk(A) \leq n$, as desired.  It is easy to verify that the $A_{ij}$ as constructed above is a completion of the matrix pattern $\s A$.

% maybe include \textit{some kind} of computation in order to demonstrate that the resulting pattern is indeed an element of $\s A_H$.  I'm not sure how to do so in a clear way.

Now, if $H$ has all its eigenvalues in $\F$, then there exists an invertible matrix $S$ such that $H = SJS^{-1}$, and $J$ is in Jordan canonical form.  Suppose that $A \in \s A_J$ is the matrix given by the above construction (and so has rank at most $n$).  Then take
\[
\tilde A = 
[I \otimes S] A [I \otimes S^{-1}]
\]
We find that 
\[
\tilde A_{n1} = SA_{n1}S^{-1} = SJS^{-1} = H
\]
And for the other pairs $i,j$ such that $i = j$ or $i = j+1$, we have
\[
\tilde A_{ij} = SA_{ij}S^{-1} = SIS^{-1} = I
\]
So, we see that $\tilde A \in \s A_H$, and that $\rk(\tilde A) \leq n$.  Thus, we have reached the desired conclusion.

Equality in the case of $d = k$ is trivial: if $d = K$, then $H = I$.  The matrix
\[
\mat{I & I & \cdots & I\\I & I & \cdots & I \\ \vdots & \vdots & \ddots & \vdots \\ I & I & \cdots & I} = 
\mat{1 & 1 & \cdots & 1\\1 & 1 & \cdots & 1 \\ \vdots & \vdots & \ddots & \vdots \\ 1 & 1 & \cdots & 1}\otimes I_k
\]
is a completion of $\s A_H$ with rank $k$.

\hfill $\square$

We may now prove our main result:

\noindent 
{\it Proof of Theorem \ref{cyclethm}.}

Let $k \in \N$ be arbitrary,  Noting that $\s A \otimes I_k$ is of the form described in theorem \ref{tiangen}, we compute $H = hI_k$, where
\[
h = \frac {a_{12}a_{23}\cdots a_{n-1,n}a_{n1}}{a_{11}a_{22}\cdots a_{n-1,n-1}a_{nn} }
\]
To begin, we consider the case in which $h = 1$.  By theorem \ref{tiangen}, we note that $d = \dim \ker(H - I) = k$ and compute
\[
\fmr(\s A) = \lim_{k \to \infty} \frac{\mr(\s A \otimes I_k)}{k} = \lim_{k \to \infty} \frac{k}{k} = 1
\]
In the case of $h \neq 1$, we note that $d = \dim \ker (H - I) = 0$.  By the inequality, we see that 
\[
\fmr(\s A) = \inf_{k \in \N} \frac{\mr(\s A \otimes I_k)}{k} \geq \inf_{k \in \N} \frac{1}{k} \cdot \frac{nk - k}{n-1} = \frac{n}{n-1} 
\]
On the other hand, by taking $k = (n-1)$ (and applying the theorem) we see that
\[
\fmr(\s A) = \inf_{k \in \N} \frac{\mr(\s A \otimes I_k)}{k} \leq \frac{\mr(\s A \otimes I_n)}{n-1} = \frac n{n-1}
\]
So that $\fmr(\s A) = \frac{n}{n-1}$, as desired.

\hfill $\square$

\section{On the case of $a_{ij} = 0$}\label{case zero}

% use \cite{W93}

If we consider the partial matrix as given in (\ref{cyc}) with $a_{ij} = 0$ for any $i,j$, we find that that there is nothing interesting to say about the fmr.  In particular:

\begin{prop}
Let $\s A$ be given as in \ref{cyc} with $a_{ij} \in \F$, and suppose that at least one of the given entries $a_{ij}$ is zero.  In all such cases, we have
\[
\mr(\s A) = \fmr(\s A) = \tmr(\s A) \leq 2
\]
\end{prop}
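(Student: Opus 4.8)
The plan is to lean on Theorem~\ref{t<f<m}, which already gives $\tmr(\s A)\le\fmr(\s A)\le\mr(\s A)$, so that it suffices to prove two things: that $\mr(\s A)\le 2$ in every case, and that $\mr(\s A)=\tmr(\s A)$ (the value of $\fmr$ is then squeezed in between). I would set everything up through the bipartite graph of knowns. The known positions $(1,1),(1,2),(2,2),(2,3),\dots,(n-1,n-1),(n-1,n),(n,n),(n,1)$ form a single $2n$-cycle on the vertices $r_1,\dots,r_n,c_1,\dots,c_n$, namely $r_1-c_2-r_2-c_3-\cdots-c_n-r_n-c_1-r_1$. Thus each known entry is an edge of this cycle and a zero entry is a ``zero edge''. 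I would call a zero edge \emph{isolated} if both of its two cyclic neighbours are nonzero, and split the argument on whether an isolated zero edge exists.

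First I would prove the universal bound $\mr(\s A)\le 2$, valid for any prescribed values. I construct a completion all of whose rows lie in $\operatorname{span}(p,q)$ for two fixed vectors $p,q\in\F^{n}$: writing row $i$ as $c_i p+d_i q$, the two known entries of row $i$ (for row $n$, the corner and diagonal entries) impose a $2\times 2$ linear system on $(c_i,d_i)$ with coefficient matrix $\smat{p_i&q_i\\p_{i+1}&q_{i+1}}$, respectively $\smat{p_1&q_1\\p_n&q_n}$. If $p,q$ are chosen so that all $n$ of these cyclically consecutive $2\times 2$ blocks are invertible, every system is solvable for any right-hand side and the completion has rank at most $2$. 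Such $p,q$ exist over any field: requiring cyclically consecutive columns $(p_i,q_i)$ to be distinct nonzero vectors of $\F^{2}$ is exactly a proper colouring of the cycle $C_n$, which is possible for all $n\ge 3$ (over $\F_2$ one uses the three nonzero vectors of $\F_2^{2}$, any two distinct of which are independent). This is the step where the real work lies, and the small-field book-keeping just indicated is the only genuine subtlety.

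Next, suppose \textbf{no} zero edge is isolated, i.e.\ every zero edge is adjacent to another zero edge. Then I exhibit a rank-$\le 1$ completion $M=uv^{T}$. Let $Z$ be the set of vertices both of whose incident edges are zero; under the hypothesis every zero edge has an endpoint in $Z$. Assign value $0$ to the coordinate of $u$ or $v$ attached to each vertex of $Z$. Deleting the zero edges from the $2n$-cycle leaves a forest, so on the remaining nonzero edges I root each tree, set the root value to $1$, and propagate the constraints $u_x v_y=a_{xy}$ (each factor nonzero) to get nonzero values at all non-$Z$ vertices. Every zero edge then has an endpoint valued $0$, and every nonzero edge has both endpoints outside $Z$ with the correct product, so $\mr(\s A)\le 1$. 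If $\mr(\s A)=1$, some known entry is nonzero, which by itself is a $1\times 1$ triangular subpattern of minimal rank $1$, giving $\tmr(\s A)\ge 1$; with $\tmr\le\fmr\le\mr\le 1$ this forces $\mr(\s A)=\fmr(\s A)=\tmr(\s A)=1$, and the case $\mr(\s A)=0$ is trivial.

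Finally, suppose some zero edge $e$ is isolated. No rank-$1$ completion can exist: in $M=uv^{T}$ the two nonzero neighbours of $e$ force both endpoint-coordinates of $e$ to be nonzero, making the product at $e$ nonzero and contradicting $a_e=0$. Hence $\mr(\s A)\ge 2$, and with the bound above $\mr(\s A)=2$. On the other hand the three consecutive edges centred at $e$ form an L-shaped triangular subpattern, a $2\times 2$ partial matrix of the form $\smat{*&?\\ *&*}$ with a $0$ in the $(2,1)$ position and nonzero entries elsewhere, whose minimal rank is $2$; hence $\tmr(\s A)\ge 2$, and with $\tmr\le\mr=2$ we conclude $\mr(\s A)=\fmr(\s A)=\tmr(\s A)=2$. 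Combining the two cases proves the proposition, the rank-$\le 2$ construction of the second paragraph being the main obstacle.
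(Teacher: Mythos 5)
Your proof is correct, but it follows a genuinely different route from the paper's. The paper first normalizes the entries to $\{0,1\}$, then splits on the parity of $n$, exhibits explicit rank-$2$ completions built from two alternating columns, and establishes $\tmr(\s A)=\mr(\s A)$ by a further case split: either the subpattern $\smat{1&?\\0&1}$ occurs (forcing $\tmr\geq 2$) or a row/column with two zero knowns can be deleted, reducing to a banded pattern where $\tmr=\mr$ is quoted from \cite{W93}. You instead organize everything around a single graph-theoretic dichotomy --- whether some zero edge of the $2n$-cycle is isolated --- which unifies the parity cases, replaces the appeal to the banded-pattern theorem by an explicit rank-$1$ completion (propagating $u_xv_y=a_{xy}$ along the paths left after deleting zero edges), and yields the sharper conclusion that $\mr(\s A)\leq 1$ whenever no zero edge is isolated; your isolated-edge branch recovers exactly the paper's $\smat{*&?\\0&*}$ obstruction for $\tmr\geq 2$. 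Your uniform rank-$\leq 2$ completion (all rows in $\spn\{p^T,q^T\}$ with cyclically consecutive $2\times 2$ blocks of $[\,p\;q\,]$ invertible) is also cleaner than the paper's two separate explicit constructions, but one wording needs repair: requiring consecutive rows $(p_i,q_i)$ to be \emph{distinct} nonzero vectors does not make the $2\times 2$ blocks invertible over fields larger than $\F_2$ (e.g.\ $(1,0)$ and $(2,0)$); you should instead color $C_n$ properly with three \emph{pairwise linearly independent} vectors such as $(1,0)$, $(0,1)$, $(1,1)$, which exist over every field, so that distinct colors do imply invertibility. With that one-line fix the argument is complete and, in my view, tidier than the published one.
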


\textit{Proof:} We note that there exist invertible diagonal matrices $P,Q$ such that for any $A \in \s A$ the matrix $PAQ$ is a rank-equivalent element of the partial matrix

%\textit{Proof:} First, note that with a suitable permutation of the rows and columns, we can move the zero-entry $a_{ij}$ to the $1,n$ spot, while the $?$ entries remain where they are.  That is, we may assume without loss of generality that $a_{1n} = 0$.  

\begin{equation}
\s A_0 = 
\begin{bmatrix}
b_{11} & b_{12} & ? & \cdots & ?\\
? & b_{22} & b_{23} & \ddots & \vdots  \\
\vdots & \ddots & \ddots&\ddots & ?\\
\\
? & \cdots & ? & b_{n-1,n-1}  & b_{n-1,n} \\
b_{1n} & ? & \cdots & ?  & b_{nn}\\
\end{bmatrix}
\end{equation}
Where $b_{ij} = 1$ if $a_{ij} \neq 0$ and otherwise, $b_{ij} = 0$.  That is: without loss of generality, we suppose that the entries $a_{ij}$ were taken from $\{0,1\}$, which is to say that $\s A = \s A_0$.  So, from this point forward, we suppose that $a_{ij} \in \{0,1\}$.
%
%Let $J = \{(i,i): 1 \leq i \leq n\} \cup \{(i,i+1)\ : 1 \leq i \leq n\} \cup \{(1,n)\}$ denote the pattern of knowns.

\vspace{1 cm}

\noindent \textbf{Case 1:} $n$ is even

First, we show that $\mr(\s A) \leq 2$.  To see this, we note that for $1 \leq j \leq n-2$, the known rows of the $j$th column are disjoint from the known rows of the $(j+2)$th column.  With that in mind, we define
\begin{gather}
x_1 = (a_{11},a_{23},a_{33},a_{45},a_{55},\dots,a_{n-2,n-1},a_{n-1,n-1},0)^T\\
x_2 = 
(a_{12},a_{22},a_{34},a_{44},\dots,a_{n,n-1},a_{nn})^T
\end{gather}
We find that the completion
\begin{equation}
A = \mat{x_1 & x_2 & x_1 & x_2 & \cdots & x_1 & x_2}
\end{equation}
satisfies $\rk(A) \leq 2$, so that $\mr(\s A) \leq 2$.

Now, we wish to show that $\tmr(\s A) = \mr(\s A)$. First, consider the case in which the known entries of a row/column are both $0$.  In this scenario, we can remove the zero row/column without affecting the minimal rank, and what we're left with is a banded pattern.  Following Theorem 1.1 of \cite{W93}, we see that the $\tmr(\s A)$ and $\mr(\s A)$ are equal, as desired.

For instance, if $a_{11} = a_{1n} = 0$, we may remove the first column, which is to say that we note
\begin{align*}
\xmr \begin{bmatrix}
0 & a_{12} & ? & \cdots & ?\\
? & a_{22} & a_{23} & \ddots & \vdots  \\
\vdots & \ddots & \ddots&\ddots & ?\\
\\
? & \cdots & ? & a_{n-1,n-1}  & a_{n-1,n} \\
0 & ? & \cdots & ?  & a_{n,n}\\
\end{bmatrix} 
 & = 
\xmr 
\begin{bmatrix}
a_{12} & ? & \cdots & ?\\
a_{22} & a_{23} & \ddots & \vdots  \\
? & \ddots&\ddots & ?\\
\\
\vdots & \ddots & a_{n-1,n-1}  & a_{n-1,n} \\
? & \cdots & ?  & a_{n,n}\\
\end{bmatrix}
\end{align*}
and we can see that what remains is a banded pattern.

Now, we note that if $\smat{1&?\\0&1}$ (or its row/column permutations) appears as a submatrix in $\s A$, then we have $\tmr(\s A) \geq \tmr \smat{1&?\\0&1} = 2$, using part (i) of proposition \ref{basics}.  Noting that $\tmr(\s A) \leq \mr(\s A)$, we may conclude that $\tmr(\s A) = \mr (A) = 2$.

If this submatrix does \textbf{not} appear in the pattern, then we must have a zero row/column.  Thus, we can remove the zero row/column without affecting the minimal rank, and what we're left with is a banded pattern.  Following Theorem 1.1 of \cite{W93}, we see that the $\tmr(\s A)$ and $\mr(\s A)$ are equal, as desired.

\vspace{1 cm}

\noindent \textbf{Case 2:} $n$ is odd.

Suppose that $\s A$ has a row/column with zeros as known entries.  Then, we can remove the zero row/column without affecting the minimal rank, and what we're left with is a banded pattern.  Following Theorem 1.1 of \cite{W93}, we see that the $\tmr(\s A)$ and $\mr(\s A)$ are equal.

We can see moreover that the tmr of the resulting banded pattern is at most $2$.  To see that this is the case, note that the resulting banded pattern will (up to a permutation of rows and columns) have the form
\begin{equation}
\s B = 
\begin{bmatrix}
a_{12} & ? & \cdots & ?\\
a_{22} & a_{23} & \ddots & \vdots  \\
? & \ddots&\ddots & ?\\
\\
\vdots & \ddots & a_{n-1,n-1}  & a_{n-1,n} \\
? & \cdots & ?  & a_{n,n}\\
\end{bmatrix}
\end{equation}
We then note that any triangular subpattern of $\s B$ will contain at most three known entries (that is, any subpattern with more than three knowns will necessarily contain the forbidden subpattern $\smat{x&?\\?&y}$).  That is, a maximal triangular subpattern will have the form (up to some permutation or transpose)
\begin{equation}
\s B_T = 
\begin{bmatrix}
a_{12} & ? & \cdots & ?\\
a_{22} & a_{23} & \ddots & \vdots  \\
? & \ddots&\ddots & ?\\
\\
\vdots & \ddots & ?  & ? \\
? & \cdots & ?  & ?\\
\end{bmatrix}
\end{equation}

Thus, we have $\mr(\s A) = \mr(\s B) \mr(\s B_T) \in \{0,1,2\}$.

Now, suppose that $\s A$ has a row/column with two \textit{non-zero} known entries.  That is, up to a suitable permutation of the rows and columns, our pattern has one of the following form:
\begin{equation}\label{1pat1} 
\s A = 
\mat{
1 & 1 & ? & \cdots & ?\\
? & a_{22} & a_{23} & \ddots & \vdots  \\
\vdots & \ddots & \ddots&\ddots & ?\\
\\
? & \cdots & ? & a_{n-1,n-1}  & a_{n-1,n} \\
a_{1n} & ? & \cdots & ?  & a_{n,n}\\
}
\end{equation}
%Where we no longer presume that $a_{1n} = 0$.  
We may show that $\mr(\s A) \leq 2$ in a manner similar to the even case.   As in the even case, we note that for $1 \leq j \leq n-2$, the known rows of the $j$th column are disjoint from the known rows of the $(j+2)$th column.  With that in mind, we define
\begin{gather}
x_1 = 
(1,a_{22},a_{34},a_{44},\dots,a_{n-2,n-1},a_{n-1,n-1},a_{1,n})^T\\
x_2 = (0,a_{23},a_{33},a_{45},a_{55},\dots,a_{n-1,n},a_{n,n})^T
\end{gather}
We see that the completion
\begin{equation}
A = \mat{x_1&x_1&x_2&x_1&x_2 & \cdots & x_1 & x_2}
\end{equation}
satisfies $\rk(A) \leq 2$.  From there, we again proceed as in the even case: if $\smat{1&?\\0&1}$ appears as a subpattern, then we have $\tmr(\s A) \geq 2$, which leads to the desired conclusion.  If not, then a zero-row must occur, which brings us to a banded pattern, and from there we know that $\tmr = \mr$ by \cite{W93}. 

The only case left to consider is that in which each row and column contains both a zero and non-zero known entry.  Up to a suitable permuation of the rows and columns, we may suppose that $\s A$ has the form
\begin{equation}
\s A = 
\mat{
1&0&? & \cdots & ?\\
? & 1 & 0 & \ddots & \vdots\\
\vdots & \ddots & \ddots & \ddots & ?\\
\\
? & \cdots & ? & 1 & 0\\
0 & ? & \cdots & ? & 1
}
\end{equation}

In this case, we define
\begin{gather}
x_1 = 
(0,1,0,1,\dots,0,1,0,1,1)^T\\
x_2 = (1,0,1,0\dots,1,0,1)^T
\end{gather}

We see that the completion
\begin{equation}
\mat{x_2 - x_1 & x_2 & x_1 & x_2 & \cdots & x_2 & x_1}
\end{equation}
has rank $2$.  Moreover, the upper-left $2 \times 2$ submatrix of $\s A$ is $\mat{1&0\\?&1}$, which has $\tmr$ 2. So, $\mr(\s A) = \tmr(\s A) = 2$. \hfill $\square$

\section{Thoughts on the problem}\label{open}

\subsection{Motivation for our case}

We may associate with any pattern $J$ a bipartite graph.  A {\it bipartite graph} is an undirected
graph $G=(V,E)$ in which the vertex set can be partitioned as
$V=X\cup Y$ such that each edge in $E$ has one endpoint in $X$ and
one in $Y$. We denote such bipartite graphs by $G=(X,Y,E)$. With the specified/unspecified patterns of a partial matrix
$(A_{ij})_{i,j=1}^{m,n}$, we associate the bipartite graph
$G=(X,Y,E)$, where $X=\{1,\ldots ,m\}$, $Y=\{1,\ldots ,n\}$, and
$(u,v)\in E$ if and only if $u\in X$, $v\in Y$, and $A_{u,v}$ is
specified. In any graph, we define a \textit{cycle} to be a tuple $(v_1,\dots,v_m)$ of distinct vertices such that $(v_i,v_{i+1}) \in E$ for $i = 1,\dots,m-1$ and $(v_{m},v_1) \in E$.  In such a cycle, any other edge of the form $(v_i,v_j) \in E$ (i.e. any edge between non-adjacent vertices of the cycle) is called a \textit{chord}, and a cycle without a chord is a \textit{minimal cycle}. A bipartite graph is called {\it bipartite chordal} if it does not contain minimal cycles of length 6 or higher.

If $J$ is an $m \times n$, then the associated bipartite graph is $G_J = (\{1,\dots,m\},\{1,\dots,n\},E)$ where $ij \in E$ if and only if $(i,j) \in J$. In \cite{CJRW89}, it is shown that if $J$ is a pattern whose associated graph is not bipartite chordal, then there exists a partial matrix over this pattern for which $\tmr(\s A) < \mr(\s A)$.  Moreover, it is conjectured that when $\s A$ is a graph whose associated graph \textbf{is} bipartite chordal, we have $\tmr(\s A) = \mr(\s A)$.

With this conjecture in mind, it is interesting to consider the fractional minimal rank of a partial matrix whose pattern is a cycle (of arbitrary size) since in these cases where the minimal rank and triangular minimal rank differ, we might expect that the fractional minimal rank lies somewhere strictly in between.  For this reason, we looked at the fractional minimal rank of a partial matrix $\s A$ whose associated bipartite graph is a cycle on $2n$ vertices, which is the minimal instance of a graph that fails to be bipartite chordal.

\subsection{Open Problems}

Here are some conjectures that remain unproven and questions that remain unanswered, which we find interesting.

\begin{enumerate}

\item Regarding theorem (\ref{tiangen}), we believe that the result should hold over any field $\F$, whether or not it is algebraically closed.  In \cite{Tian}, we see that the result holds over any field when $n = 3$.

\item Our motivating conjecture from \cite{CJRW89}, which states that if $\s A$ is a graph whose associated graph is bipartite chordal, then $\tmr(\s A) = \mr(\s A)$, remains undecided.  Of course, if $\tmr(\s A) = \mr(\s A)$, then $\tmr(\s A) = \fmr(\s A) = \mr(\s A)$.

\item It is notable that in our investigation of this particular patterns of partial matrices, we found that $\tmr(\s A) \neq \mr(\s A)$ occurs if and only if $\tmr (\s A) < \fmr(\s A) < \mr(\s A)$. We conjecture that this should be true in general.  That is, for all partial matrices $\s A$, we have either $\tmr(\s A) < \fmr(\s A) < \mr(\s A)$ or $\tmr(\s A) = \fmr(\s A) = \mr(\s A)$.

\item Another question to be answered: in equation (\ref{fmrinf}), we characterized the $\fmr$ as an infimum over $b \in \N$.  Is this infimum necessarily attained?
For the s $\fmr(\s A)$ necessarily rational?

\item What does the set of minimizers look like?  In particular, the set of matrices $A \in \s A \otimes I_b$ satisfying $\rk(A) = \mr_b(\s A)$ form an affine algebraic set.  What are the properties of this set?  Perhaps something can be said about its dimension.

%Examples with $\fmr \in \N$ and $\tmr < \fmr < \mr$?

% ANOTHER QUESTION:  What are the multiplicity of these minimizers?  What do these solution sets look like as varieties?

\end{enumerate}

%\bibliographystyle{plain}
%\bibliography{master}

\end{document}